\newtheorem{theorem}{Theorem}
\newtheorem{conjecture}[theorem]{Conjecture}
\newtheorem{observation}[theorem]{Observation}
\newtheorem{corollary}[theorem]{Corollary}
\newtheorem{lemma}[theorem]{Lemma}
\newtheorem{definition}{Definition}
\newtheorem{notation}{Notation}
\theoremstyle{remark}
\newtheorem*{remark}{Remark}
\def\C{\mathcal{C}}
\def\bth{\begin{theorem}}
\def\eth{\end{theorem}}
\def\bc{\begin{corollary}}
\def\ec{\end{corollary}}
\def\bcj{\begin{conjecture}}
\def\ecj{\end{conjecture}}
\newcommand{\bizveg}{{\hfill $\Box$}}
\newcommand{\etal}{et\ al.\ }
\author[Lajos Gy\H{o}rffy et al.]
{Lajos Gy\H{o}rffy\affiliationmark{1,2}\thanks{\href{mailto:lgyorffy@math.u-szeged.hu}{lgyorffy@math.u-szeged.hu}, \url{https://orcid.org/0000-0002-4606-5390}}
  \and Andr\'as London\affiliationmark{3}\thanks{\href{mailto:london@inf.u-szeged.hu}{london@inf.u-szeged.hu}, \url{https://orcid.org/0000-0003-1957-5368}}
  \and G\'abor V. Nagy\affiliationmark{1,2}\thanks{\href{mailto:ngaba@math.u-szeged.hu}{ngaba@math.u-szeged.hu}, \url{https://orcid.org/0000-0002-3085-9620}}
  \and Andr\'as Pluh\'ar\affiliationmark{3}\thanks{\href{mailto:pluhar@inf.u-szeged.hu}{pluhar@inf.u-szeged.hu}, \url{https://orcid.org/0000-0001-6576-4202}\\ The research leading to these results has received funding from the national project TKP2021-NVA-09.
 Project no.\ TKP2021-NVA-09 has been implemented with the support provided by the Ministry of Culture and Innovation of Hungary from the National Research, Development
 and Innovation Fund, financed under the TKP2021-NVA funding scheme.}}
\title[Partitions of $K_n$ into Special Bipartite Graphs]{Partitions of $K_n$ into Special Bipartite Graphs}
\affiliation{
  % one line per affiliation, no postal codes, grant numbers or similar
  University of Szeged, Bolyai Institute, Szeged, Hungary\\
  John von Neumann University, Kecskem\'et, Hungary\\
  University of Szeged, Institute of Informatics, Szeged, Hungary}
\keywords{Graph partition, Graham-Pollak, Ferrers graphs, nestedness, forbidden induced subgraphs}
\begin{document}
% This is only used if you are compiling for a volume before vol 25
% \publicationdetails{VOL}{2015}{ISS}{NUM}{SUBM}
% This is the new form of collecting the data, starting with vol 25
\publicationdata{vol. 27:3}{2025}{24}{10.46298/dmtcs.15361}{2025-03-12; 2025-03-12; 2025-10-21}{2025-10-27}
\maketitle
\begin{abstract}
We study the problem of partitioning the edge set of the complete graph into bipartite subgraphs under certain constraints defined by forbidden subgraphs. These constraints lead to both classical problems, such as partitioning into independent matchings or complete bipartite subgraphs, and novel variants motivated by structural restrictions. Our theoretical framework is inspired by clustering problems in real-world transaction graphs, which can be formulated naturally as edge partitioning problems under bipartite graph constraints.
	
The main result of this paper is the proof of the bounds for $\chi'_{2K_2}(n)$, which corresponds to the minimum number of induced $2K_2$-free bipartite subgraphs needed to partition the edges of $K_n$. In addition to this central result, we also present several similar bounds for other forbidden subgraphs on three or four vertices. 
Some are included primarily for the sake of completeness, to demonstrate the broad applicability of our approach, and some lead to other novel or well-known graph theoretical problems.
\end{abstract}

\section{Introduction and Results}
\subsection{Graph theoretical motivation}
A classical result of Graham and Pollak \cite{GP} shows that at least $n-1$ complete bipartite graphs are needed to partition the edge set of the complete graph $K_n$; it is easy to see that $n-1$ complete bipartite graphs are enough.
A natural generalization has led to the \emph{biclique partitioning problem}, where the goal is to partition the edges of a graph $G$ with the minimum number of bicliques. More generally, given a \emph{host} graph $G$, partition its edges into subgraphs belonging to a given set, called the template class. Elements of the template class are called template graphs, or simply \emph{templates} \cite{KU}. In the \emph{covering} version of the problem, every edge of the host graph must belong to at least one (but not necessarily exactly one, as in the case of partitioning) template. It is easy to see that $\lceil \log_2 n \rceil$ bicliques are sufficient to cover $K_n$  (cf.\ the aforementioned result of Graham and Pollak), showing that the gap between the partitioning and covering numbers for the same host graph can be significant. For a detailed overview, see \cite{Schwartz}.

In this study, we consider the graph partitioning problem under the constraint that the template class is defined by forbidden subgraphs. Specifically, we seek to partition the edge set of a given graph $G$ into subgraphs that avoid certain forbidden configurations. We investigate how different sets of forbidden subgraphs impact the partitioning structure and the minimum number of subgraphs required to partition the edges of $G$.

\subsection{Real-world motivation}
Our theoretical framework is primarily motivated by its applications in clustering real-world transaction graphs, where such constraints naturally emerge. Graph clustering and \emph{community detection} play a crucial role in graph-based data mining and in the development of models on graphs; see for example \cite{BCsP, NG, PDFV, Sch}. Motivated by applications such as social networks, these clustering methods aim to identify dense subgraphs and partition the graph into meaningful communities. While these methods often perform well in practice, defining a theoretically perfect clustering remains a challenging and unsolved problem, as discussed by Kleinberg \cite{Kleinberg}. 

Unlike clustering the nodes, edge partitioning, or edge-based clustering, allows each node to participate in multiple clusters, reflecting the fact that a single entity can belong to different groups depending on the context. This is particularly realistic in economic networks, where, for example, a company might interact with different industries through distinct transactions, such as being a supplier in one cluster and a customer in another. Generally, in the case of so-called {\em technological graphs} edge density based node clustering methods are not satisfactory. The background and examples are explained in \cite{Jun, LMP, Uzzi} and motivated the development of completely different approaches to clustering. Here we only recall some facts that are essential for understanding. Certain social networks, and especially technological or transaction networks, tend to have fewer triangles and often exhibit tree-like structures \cite{ASM}, making disjoint, dense clusters inherently inadequate. In addition, certain bipartite networks, such as pollination networks (of plant and pollinating animal species) or trade networks (of countries and imported/exported goods), often display special structures like \emph{nestedness} \cite{Bastolla, Uzzi, Wright}. In these networks, nodes can be ordered so that the neighborhood of each lower-ranked node contains that of any higher-ranked node. Ecological networks commonly exhibit this nested pattern, where specialists (lower-ranked) interacting with generalists (higher-ranked), which in turn interact with both specialists and other generalists. 
%The common element in those model and what follows is that all models try to minimize the description complexity, \ie use minimal number of cluster, communities to explain the underlying structure of a host graph.

\subsection{Previous work}
In a former model, described in \cite{LMP}, the authors consider special \emph{good (vertex) colorings} of a graph, where the edge structure between any two color classes is restricted. A particularly important case is when the bipartite subgraphs between color classes are induced $2K_2$-free. This property is a structural concept also referred to as perfect nestedness in ecological networks.
Regarding computational complexity, some of the special colorings with a minimum number of colors can be found in polynomial time, however, deciding cases like the $2K_2$-free bipartite graphs is generally NP-complete.

A more application-driven approach is explored by Gera \etal \cite{GLP, GL}, where the goal is to find a covering of a graph such that the elements of the covering are not necessarily disjoint $2K_2$-free bipartite graphs. In this paper, we propose a model that lies between the purely theoretical and the highly application-oriented approaches.

\subsection{Definitions and notations}
After the brief overview, we turn to the discussion of the edge partitioning results of this paper. We begin with formal definitions and notations. We assume familiarity with concepts such as \emph{graphs, subgraphs, induced subgraphs, bipartite graphs, biclique, coloring, matching}, etc., which can be found, for example, in \cite{Diestel}. It is straightforward to check that induced $2K_2$-free bipartite graphs are precisely the so-called \emph{Ferrers graphs}.

\begin{definition}
A bipartite graph with color classes $A$ and $B$ is called a \emph{Ferrers graph} if there exists an ordering $a_1,\dots,a_n$ of the vertices in $A$ such that $N(a_1)\subseteq N(a_2)\subseteq\dots\subseteq N(a_n)$ holds, where $N(a_i)$ denotes the neighborhood of $a_i$. 
(This condition implies the same ``nestedness'' for the neighborhoods of vertices of $B$ as well.)
\end{definition}
Ferrers graphs are named for the property that the $1$'s in their bipartite adjacency matrix form a Ferrers diagram (of an integer partition) when the rows and columns are arranged in ``degree-decreasing'' order.
\begin{definition}
Fix a small graph $H$. An \emph{$H$-avoiding bipartite partition} of a graph $G$ is a set of bipartite graphs $G_1, \dots, G_k$ such that $E(G)=\cup_{i=1}^k E(G_i)$, $E(G_i) \cap E(G_j)=\emptyset$ if $i \not = j$ and $G_i$ does not contain $H$ as an induced subgraph for all $i$.
(The graphs $G_1,\dots,G_k$ are also called \emph{template graphs}.)
\end{definition}

\begin{notation}
If we have more than one graph $H$ (i.e. $H_1, H_2,\dots$), then we avoid a set of small graphs  $\mathcal{H}=\{H_1,H_2,\dots\}$ among the graphs $G_i$ and we write $\mathcal{H}$ instead of $H$.
\end{notation}

Of course, we want to have as few graphs as possible in an $H$-avoiding bipartite partition. 

\begin{notation}
For a fixed graph $H$ and a graph $G$, we denote by $\chi'_H(G)$ the smallest integer $k$ for which there exists an $H$-avoiding bipartite partition of $G$ consisting of $k$ graphs. We also use the notation $\chi'_H(n):=\chi'_H(K_n)$. For a set $\mathcal{H}$ of forbidden subgraphs, the notations $\chi'_{\mathcal{H}}(G)$ and $\chi'_{\mathcal{H}}(n)$ are defined analogously.
\end{notation}

\begin{observation}\label{obs_mono}
For any fixed set $\mathcal{H}$ of forbidden subgraphs, the sequence $\left(\chi'_{\mathcal{H}}(n)\right)_{n=1}^\infty$ is monotonically increasing, that is, $\chi'_{\mathcal{H}}(n)\le\chi'_{\mathcal{H}}(n+1)$ for all $n$.
\end{observation}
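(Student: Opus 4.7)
The plan is to show that an $\mathcal{H}$-avoiding bipartite partition of $K_{n+1}$ can be ``restricted'' to an $\mathcal{H}$-avoiding bipartite partition of $K_n$ without increasing the number of template graphs. Formally, I would start from an optimal partition $G_1,\dots,G_k$ of $K_{n+1}$ with $k=\chi'_{\mathcal{H}}(n+1)$, pick any vertex $v$ of $K_{n+1}$, and let $G_i'$ be the subgraph of $G_i$ induced on the vertex set $V(K_{n+1})\setminus\{v\}$ (equivalently, delete $v$ and all its incident edges from each $G_i$).

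Next I would verify that $G_1',\dots,G_k'$ form an $\mathcal{H}$-avoiding bipartite partition of $K_n$. Every edge of $K_n=K_{n+1}-v$ is an edge of $K_{n+1}$, so it belongs to exactly one $G_i$ and hence to exactly one $G_i'$; therefore the $G_i'$ partition $E(K_n)$. Each $G_i'$ is a subgraph of the bipartite graph $G_i$, so $G_i'$ is bipartite (use the bipartition of $G_i$ restricted to the remaining vertices). The key observation is that an induced subgraph of $G_i'$ is also an induced subgraph of $G_i$: if $S\subseteq V(G_i')$, then the subgraph of $G_i'$ induced on $S$ equals the subgraph of $G_i$ induced on $S$. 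Consequently, if some $H\in\mathcal{H}$ were an induced subgraph of $G_i'$, it would be an induced subgraph of $G_i$, contradicting our assumption.

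To finish, I would note that the partition $G_1',\dots,G_k'$ may contain some edgeless graphs (in case all edges of some $G_i$ were incident to $v$), which can be discarded, leaving a partition of $K_n$ with at most $k$ templates. This gives $\chi'_{\mathcal{H}}(n)\le k=\chi'_{\mathcal{H}}(n+1)$.

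There is essentially no serious obstacle here; the only point that deserves care is confirming that the property ``does not contain $H$ as an \emph{induced} subgraph'' is preserved when passing from $G_i$ to $G_i'$, which is immediate from the transitivity of the induced-subgraph relation. The rest is bookkeeping.
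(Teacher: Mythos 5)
Your proposal is correct and coincides with the paper's own argument: both restrict an optimal partition of $K_{n+1}$ to an $n$-element vertex subset (equivalently, delete one vertex) and observe that induced subgraphs of the templates remain bipartite, $\mathcal{H}$-avoiding, and partition the edges of $K_n$. Your extra care about transitivity of the induced-subgraph relation and discarding empty templates is just a more explicit write-up of the same proof.
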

\begin{proof}
Consider an (optimal) $\mathcal{H}$-avoiding bipartite partition $G_1,\dots,G_t$ of $K_{n+1}$ where $t=\chi'_{\mathcal{H}}(n+1)$. Let $S$ be an $n$-element subset of $V(K_{n+1})$.
Then the induced subgraphs $G_1|_S,\dots,G_t|_S$ clearly form an $\mathcal{H}$-avoiding bipartite partition of $K_{n+1}|S\simeq K_n$ (empty graphs may appear here), proving that $\chi'_{\mathcal{H}}(n)\le t=\chi'_{\mathcal{H}}(n+1)$.
\end{proof}
%We consider only bipartite $G_i$ graphs, but the problem can be generalized to all $G_i$ graphs. From the general case to the bipartite case we should only add to $\mathcal H$ all odd cycles ($C_3, C_5, \dots$).

We study mainly the case $G=K_n$. It motivates more general problems and highlights the difficulty of some examples even in this case. 

In Figure \ref{kiz} we list the small excluded subgraphs we will consider. We use $K_n, P_n, C_n$ and $S_n$ for the complete graph, path, cycle and star on $n$ vertices, respectively. The disjoint union of two graphs $H_1$ and $H_2$ with disjoint vertex sets is denoted by $H_1 + H_2$.

\begin{figure}[htbp]
	\centering
	\includegraphics[scale=0.41]{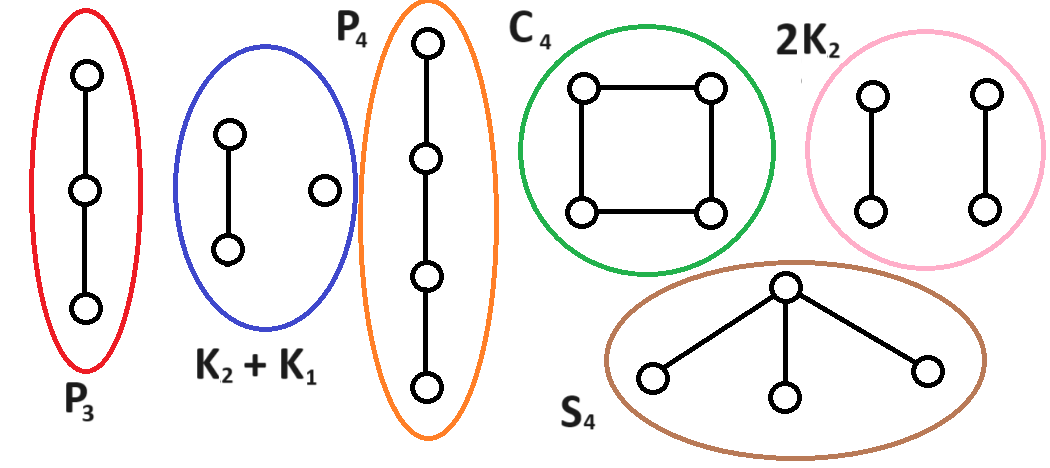}
	\caption {Small excluded subgraphs}
	\label{kiz}
\end{figure}

\begin{table}[htbp]
	\small
	\centering
	\caption{Results on $\chi'_{\mathcal{H}}(n)$}
	\label{tab:chi_h_summary}
	\begin{tabular}{|c|c|c|c|c|}
		\hline
		 & \textbf{ $H$ or $\mathcal{H}$} & \textbf{Lower/Upper Bound} & \textbf{Comment} & \textbf{Template Graphs} \\
		\hline
		\hline
		2.1 & $\emptyset$ & $\lceil\log_2 n\rceil$ & Folklore, Observation \ref{obs_empty}. & Any bipartite graph \\
		\hline
		\hline
		2.2 & $P_3$ & $n-1$ (even), $n$ (odd) & Chromatic index, Obs. 4. & Matchings \\
		\hline
		2.3 & $K_2+K_1$ & $n-1$ & Graham--Pollak, Theorem 5. & Bicliques \\
		\hline
		2.4 & $K_2+K_1, P_3$ & $\binom{n}{2}$ & Trivial, Observation 6. & Single edges \\
		\hline
		\hline
		2.5 & $P_4$ & $\lceil\log_2 n\rceil$ & Folklore, Observation 7. & Union of bicliques \\
		\hline
		2.6 & $C_4$ & $a\sqrt{n}$, $b\sqrt{n}\log n$ & \makecell[c]{Zarankiewicz (lower bound) \\ Projective planes (upper b.)\\ Theorem 8.} & $C_4$-free bipartite graphs \\
		\hline
		2.7 & $2K_2$ & \makecell[c]{$\lfloor\log_2 n\rfloor + \frac{1}{4} \sqrt{\lfloor\log_2 n\rfloor}-1$ \\ $2\left\lceil\sqrt{n}\right\rceil-2$} & \textbf{Main Result, Theorem \ref{NVGtetel}.} & Ferrers graphs \\
		\hline
		2.8 & $S_4$ & $\lceil(n-1)/2\rceil$ & Hamiltonian cycles, Obs. 10. & $\Delta(G)\le 2$ \\
		\hline
		\hline
		2.9 & $2K_2, C_4$ & $\left\lceil n/2 \right\rceil$ & \makecell[c]{Double star decomposition\\ Theorem \ref{ds}.} & Double stars \\
		\hline
		2.10 & $2K_2, C_4, P_4$ & $n-1$ & Graham--Pollak again & Stars \\
		\hline
		2.11 & $P_4, C_4, S_4$ & $\frac{3}{4}n+o(n)$ & Cherry orchard, Theorem 12. & Union of $P_3$'s and $K_2$'s \\
		\hline
		2.12 & $P_4, C_4$ & $\lceil n/2\rceil+1$ & Star orchard, Theorem 13. & Union of stars \\
		\hline
		\hline
		2.13 & $P_4, 2K_2$ & $n-1$ & Graham--Pollak, Obs. 14. & Bicliques \\
		\hline
		2.14 & $S_4, 2K_2$ & $n(n-1)/8$ & $C_4$ decomposition, Obs. 15. & $C_4, P_4, P_3, K_2$ \\
		\hline
		2.15 & $S_4, 2K_2, P_4$ & $n(n-1)/8$ & $C_4$ decomposition, Obs. 16. & $C_4, P_3, K_2$ \\
		\hline
		2.16 & $C_4, S_4$ & $\lceil(n-1)/2\rceil$ & Hamiltonian cycles, Obs. 17. & $\Delta(G)\le 2$, $C_4$-free \\
		\hline
		2.17 & $P_4, S_4$ & $n/2$ & $C_4$ orchard, Obs. 18.  & Union of $C_4$'s, $P_3$'s, $K_2$'s \\
		\hline
		2.18 & $C_4, S_4, 2K_2$ & $n(n-1)/6$ & $P_4$ decomposition, Obs. 19. & $P_4, P_3, K_2$ \\
		\hline
		2.19 & $P_4, C_4, S_4, 2K_2$ &  $\left\lceil n(n-1)/4\right\rceil$ & $P_3$ decomposition, Obs. 20. & Cherries ($P_3$ or $K_2$) \\
		\hline
	\end{tabular}
\end{table}

We consider only bipartite partitions $G_1,\dots,G_k$ in which no template graph has isolated vertices. We also assume that none of the template graphs is empty (as we are interested in partitions with a minimal number of template graphs).

\subsection{Overview of the results}
While our main focus is the case where the forbidden graph is $H = 2K_2$ (the induced matching of two edges)--motivated by applications in clustering real-world transaction graphs--we also consider several other small excluded graphs $H$ or sets of graphs $\mathbf{\mathcal{H}}$. These are: on three vertices $P_3$ and $K_2 + K_1$ and on four vertices $P_4, C_4, 2K_2$ and $S_4$, for mathematical completeness, following the approach in \cite{KKTW}. The values of $\chi'_{\mathcal{H}}(n)$ range from logarithmic in $n$ (specifically $\lceil\log_2 n\rceil$) up to quadratic in $n$ (specifically $\binom{n}{2}$).

The main result of this paper is the proof of the bounds for $\chi'_{2K_2}(n)$, presented in Section~\ref{sec:proof}. For smaller results presented in Section~\ref{sec:small} we only provide proof sketches in some case (especially 2.13-2.19). The complexity of determining $\chi'_{\mathcal{H}}(n)$ varies significantly across the cases presented in Table~\ref{tab:chi_h_summary}; some are trivial or relate to well-known problems, while others remain quite challenging. Some other complexity results are presented in Section~\ref{sec:complexity}.

\section{Classification of small excluded subgraphs}\label{sec:small}

\subsection{Partitioning into bipartite graphs without restriction}
It is a folklore fact that $K_n$ cannot be partitioned into less than $\log_2 n$ (arbitrary) bipartite graphs, and $\lceil\log_2 n\rceil$ bipartite graphs are enough. (We also sketch the proof in Theorem~\ref{NVGtetel}.)

\begin{observation}\label{obs_empty}
If there are no excluded graphs, then $\chi'_{\emptyset}(n)=\lceil\log_2 n\rceil$.
\end{observation}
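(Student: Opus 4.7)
The plan is to prove both the upper and lower bounds separately, as is standard for this folklore result.

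For the upper bound, I would construct an explicit partition. Set $k:=\lceil\log_2 n\rceil$ and assign to each vertex $v\in V(K_n)$ a distinct binary label $\ell(v)\in\{0,1\}^k$ (possible since $2^k\ge n$). For each coordinate $i\in\{1,\dots,k\}$, define $G_i$ to be the graph on $V(K_n)$ whose edges are those pairs $\{u,v\}$ such that $i$ is the \emph{largest} index at which $\ell(u)$ and $\ell(v)$ differ. Every edge of $K_n$ is assigned to exactly one $G_i$ because any two distinct labels differ in at least one coordinate, and the maximum such coordinate is unique. Each $G_i$ is bipartite with color classes $\{v:\ell(v)_i=0\}$ and $\{v:\ell(v)_i=1\}$, since every edge of $G_i$ joins vertices differing in bit $i$. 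This gives $\chi'_\emptyset(n)\le\lceil\log_2 n\rceil$.

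For the lower bound, suppose $G_1,\dots,G_k$ is a bipartite partition of $K_n$. Fix, for each $i$, a proper $2$-coloring $c_i\colon V(K_n)\to\{0,1\}$ of $G_i$ (vertices isolated in $G_i$ may be colored arbitrarily). Define the signature $\sigma(v):=(c_1(v),\dots,c_k(v))\in\{0,1\}^k$. I claim $\sigma$ is injective: if $\sigma(u)=\sigma(v)$ for distinct $u,v$, then for each $i$ the vertices $u,v$ lie in the same color class of $G_i$, so the edge $uv$ cannot belong to any $G_i$, contradicting that the $G_i$ partition $E(K_n)$. Injectivity forces $2^k\ge n$, i.e. $k\ge\lceil\log_2 n\rceil$.

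Neither direction presents any real obstacle — the upper bound is a clean encoding and the lower bound is a short pigeonhole argument on the coloring signatures. The only minor subtlety worth noting in writing is the handling of isolated vertices in the lower bound (they do not constrain the coloring but can be assigned either color without affecting the argument), so the proof works regardless of which convention on isolated vertices is adopted.
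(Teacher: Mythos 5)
Your proposal is correct and takes essentially the same approach as the paper: your coloring signatures are exactly the paper's class-vectors (see Lemma~\ref{NVGlemma2} and the lower-bound sketch in the proof of Theorem~\ref{NVGtetel}, where this folklore fact is justified), and your largest-differing-bit partition coincides with the recursive bisection of $K_n$ described in the paper's $P_4$-free subsection. Both directions are sound, including the handling of isolated vertices.
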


\begin{observation}\label{obs_trivi}
$\chi'_{\mathcal H}(n)\ge\lceil\log_2 n\rceil$, for any set $\mathcal H$ of forbidden subgraphs.
\end{observation}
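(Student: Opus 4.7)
The plan is to derive this directly from Observation~\ref{obs_empty} by noting that adding forbidden configurations can only shrink the family of allowed partitions, never enlarge it. First I would observe that if $G_1,\dots,G_k$ is an $\mathcal{H}$-avoiding bipartite partition of $K_n$, then in particular it is a bipartite partition of $K_n$ in the unconstrained sense (the $\emptyset$-avoiding case). Therefore
\[
\chi'_{\emptyset}(n)\;\le\;\chi'_{\mathcal{H}}(n),
\]
and combining this with Observation~\ref{obs_empty} yields $\chi'_{\mathcal{H}}(n)\ge\lceil\log_2 n\rceil$, as required.

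If one prefers a self-contained argument that does not invoke Observation~\ref{obs_empty} as a black box, I would give the standard binary-encoding proof directly. Suppose $G_1,\dots,G_k$ is any bipartite partition of $K_n$, and for each template $G_i$ fix a $2$-coloring of its vertex set with colors $\{0,1\}$ (extending arbitrarily to vertices of $K_n$ not incident to $G_i$). Assign to each vertex $v\in V(K_n)$ the binary string $c(v)\in\{0,1\}^k$ whose $i$-th coordinate is the color of $v$ in $G_i$. Two vertices $u,v$ with $c(u)=c(v)$ would lie on the same side of every $G_i$ simultaneously, so the edge $uv$ could not be covered by any template; since $\{G_i\}$ partitions all edges of $K_n$, the map $c$ must be injective, forcing $2^k\ge n$ and hence $k\ge\lceil\log_2 n\rceil$.

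The argument has no real obstacle: the main point is simply that $\chi'_{\mathcal{H}}$ is monotone in the set $\mathcal{H}$ of forbidden subgraphs, so the trivial unrestricted lower bound transfers to every restricted variant. I would mention this monotonicity explicitly, since it is the conceptual content of the observation and will be reused implicitly throughout the subsequent classification.
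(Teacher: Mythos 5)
Your proof is correct and follows essentially the same route as the paper: the paper also treats this as an immediate consequence of the fact that any $\mathcal{H}$-avoiding bipartite partition is in particular an unrestricted bipartite partition, and its folklore lower bound for that case is proved by exactly your binary-encoding argument (the ``class-vector'' sketch given in the proof of the lower bound of Theorem~\ref{NVGtetel}).
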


%In this section we give the most possible forbidden configurations 
\subsection{$P_3$-free bipartite graphs}
If $H=P_3$, then the maximum degree in each template graph must be (at most) $1$, thus $G_i$ is a {\bf matching} for all $1 \le i \le k$. Therefore, the number of graphs in a minimal $P_3$-avoiding bipartite partition of $G$ is just the edge chromatic number of $G$,
which is equal to $\Delta(G)$ or $\Delta(G)+1$ by Vizing's theorem (where $\Delta(G)$ denotes the maximum degree of $G$). The edge chromatic number of the complete graph $G=K_n$ is well known, which leads to the following observation.

%Hence a matching can consist of maximal $n/2$ (if $n$ is even) or $(n-1)/2$ (if $n$ is odd) edges in a complete graph, and $n-1$ or $n$ matching is enough. We can observe the following.

\begin{observation}
$\chi'_{P_3}(n)=n-1$, if $n$ is even; and $\chi'_{P_3}(n)=n$, if $n$ is odd.
\end{observation}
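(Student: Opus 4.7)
The discussion immediately preceding the observation already reduces the statement to the classical determination of the chromatic index $\chi'(K_n)$: since $P_3$-free means maximum degree at most $1$, each template graph is a matching (which is automatically bipartite), so a $P_3$-avoiding bipartite partition of $K_n$ is precisely a proper edge coloring. Thus it suffices to show $\chi'(K_n)=n-1$ for even $n$ and $\chi'(K_n)=n$ for odd $n$.

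For the lower bound when $n$ is odd, I would observe that any matching in $K_n$ has at most $\lfloor n/2\rfloor=(n-1)/2$ edges, so $n-1$ matchings together cover at most $(n-1)^2/2<\binom{n}{2}$ edges; hence at least $n$ matchings are needed. When $n$ is even, the trivial bound $\chi'(K_n)\ge\Delta(K_n)=n-1$ (or Vizing's theorem, as cited) does the job.

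For the upper bound I would give the standard round-robin tournament construction. When $n$ is even, label the vertices $v_0,v_1,\dots,v_{n-2},v_\infty$; for each $i\in\{0,1,\dots,n-2\}$, form a perfect matching $M_i$ by pairing $v_\infty$ with $v_i$ and pairing $v_{i+j}$ with $v_{i-j}$ (indices taken modulo $n-1$) for $j=1,\dots,(n-2)/2$. A direct check shows $M_0,\dots,M_{n-2}$ partition $E(K_n)$, giving $n-1$ matchings. When $n$ is odd, apply the even case to $K_{n+1}$ to obtain $n$ perfect matchings of $K_{n+1}$, then delete any vertex and restrict; the restrictions are $n$ matchings of $K_n$ that still partition its edge set.

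No step here is hard; the only thing worth stating carefully is the counting argument that rules out $n-1$ matchings when $n$ is odd, since Vizing's theorem alone leaves both $n-1$ and $n$ open. With that, together with the explicit round-robin decomposition, the observation follows.
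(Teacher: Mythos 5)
Your proposal is correct and takes essentially the same route as the paper: reduce the statement to the chromatic index of $K_n$ by noting that $P_3$-free templates are exactly matchings. The paper simply cites the well-known value of $\chi'(K_n)$ (via Vizing's theorem and folklore), whereas you make that classical fact self-contained with the standard round-robin $1$-factorization and the counting argument ruling out $n-1$ matchings for odd $n$; both steps are standard and correct.
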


\begin{remark}
The notation $\chi'_{\mathcal H}(G)$ comes from the fact that $\chi'_{P_3}(G)=\chi'(G)$ for all graphs $G$, where $\chi'(G)$ is the chromatic index of the graph $G$.
\end{remark}

\subsection{$(K_2+K_1)$-free bipartite graphs}
If $H=K_2+K_1$ then there cannot be a single point independent of any other edge. Therefore, $G_i$ is a {\bf complete bipartite graph} for all $1 \le i \le k$ and the number of graphs in a minimal $H$-avoiding bipartite partition of $G$ gives the well-known Graham-Pollak theorem. 

\begin{theorem}[Graham--Pollak \cite{GP}]
There are at least $n-1$ subgraphs in a complete bipartite partition of $K_n$ (and $n-1$ subgraphs are enough), that is, $\chi'_{K_2+K_1}(n)=n-1$.
\end{theorem}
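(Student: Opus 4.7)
The plan is to prove matching upper and lower bounds. The upper bound $\chi'_{K_2+K_1}(n) \le n-1$ follows from the standard star decomposition: label the vertices $v_1,\dots,v_n$ and take the $n-1$ stars $S_i$ with center $v_i$ and leaves $v_{i+1},\dots,v_n$ for $i=1,\dots,n-1$. Each such star is complete bipartite (hence $(K_2+K_1)$-induced-free, as just argued in the paragraph preceding the theorem), and every edge of $K_n$ belongs to exactly one of them.

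For the lower bound I would use Tverberg's classical linear-algebra argument. Suppose, for contradiction, that $E(K_n)$ is partitioned into $k \le n-2$ complete bipartite subgraphs with bipartitions $(A_t,B_t)$, $t=1,\dots,k$. Introduce real unknowns $x_1,\dots,x_n$ and impose the $k+1$ homogeneous linear constraints
\[
\sum_{v=1}^n x_v \;=\; 0 \qquad\text{and}\qquad \sum_{v\in A_t} x_v \;=\; 0 \quad (t=1,\dots,k).
\]
Since $k+1 \le n-1 < n$, this underdetermined system has a nontrivial real solution $(x_1,\dots,x_n)$.

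The crux is the identity
\[
0 \;=\; \Bigl(\sum_{v=1}^n x_v\Bigr)^{\!2} \;=\; \sum_{v=1}^n x_v^2 \;+\; 2\!\!\sum_{\{u,w\}\in E(K_n)}\!\! x_u x_w ,
\]
combined with the edge partition, which rearranges the cross-term as
\[
\sum_{\{u,w\}\in E(K_n)}\! x_u x_w \;=\; \sum_{t=1}^k \Bigl(\sum_{u\in A_t} x_u\Bigr)\!\Bigl(\sum_{w\in B_t} x_w\Bigr) \;=\; 0,
\]
where the last equality uses the constraints on the $A_t$-sums. Hence $\sum_v x_v^2 = 0$, forcing $x \equiv 0$, contradicting nontriviality.

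I do not anticipate a genuine obstacle here, since both ingredients are short. The only delicate point is choosing the correct bookkeeping of exactly $k+1$ constraints (rather than $k$, which would leave $(\sum_v x_v)^2$ unknown, or $2k$, which would weaken the dimension count): with this choice, the bilinear edge-sum collapses while the constraint kernel remains nontrivial, producing the contradiction.
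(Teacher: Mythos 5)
Your proof is correct, but the comparison here is asymmetric: the paper does not prove this theorem at all. It states it as a known result, cites Graham and Pollak, and explicitly remarks that essentially all known proofs are linear-algebraic, no purely combinatorial proof being known (apart from Vishwanathan's counting proof). Your upper bound, the star decomposition with $S_i$ centered at $v_i$ and leaves $v_{i+1},\dots,v_n$, is the standard construction that the paper itself invokes later (in the $\{2K_2,C_4,P_4\}$-free subsection, where the stars are said to ``come from the well-known solution of the Graham-Pollak theorem''). Your lower bound is Tverberg's argument, which is precisely one of the linear-algebraic proofs the paper's remark refers to, and it checks out: the dimension count is right ($k+1\le n-1<n$ homogeneous equations in $n$ unknowns guarantee a nontrivial solution); the rearrangement $\sum_{\{u,w\}\in E(K_n)}x_ux_w=\sum_{t=1}^{k}\bigl(\sum_{u\in A_t}x_u\bigr)\bigl(\sum_{w\in B_t}x_w\bigr)$ is valid exactly because the complete bipartite subgraphs partition the edge set, so each product $x_ux_w$ is counted once on each side; and the constraints then force $\sum_v x_v^2=0$, contradicting nontriviality. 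Your closing remark about the bookkeeping is also on point: with only the $k$ constraints on the $A_t$-sums one gets $\bigl(\sum_v x_v\bigr)^2=\sum_v x_v^2$, which is not a contradiction, so the extra constraint $\sum_v x_v=0$ is genuinely needed. What your write-up buys, relative to the paper, is self-containedness: the paper treats this bound as a black box imported from the literature, whereas you supply a complete and correct proof of both directions.
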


Most of the proofs of this theorem use linear algebraic methods. No purely combinatorial proof is known (except a sophisticated one by Vishwanathan \cite{vish}, which can be seen as a translation of the linear algebraic method). Even for weaker lower bounds, e.g. $\sqrt n$, we do not know any nice combinatorial proof.

\subsection {$\{K_2+K_1, P_3\}$-free bipartite graphs}
If $\mathcal{H}=\{K_2+K_1,P_3\}$, then it is easy to see that the template graphs $G_i$ are {\bf single edges} for all $1 \le i \le k$. That is, the number of graphs in a minimal $H$-avoiding bipartite partition of $G$ is just the number of edges $n \choose 2$. 

\begin{observation}
If $\mathcal{H}=\{K_2+K_1,P_3\}$, then $\chi'_{\{K_2+K_1,P_3\}}(n)={n \choose 2}$.
\end{observation}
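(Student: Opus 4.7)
The plan is to combine the two preceding subsections' observations directly. By the $P_3$-free condition (as explained in the $P_3$ subsection), every template graph $G_i$ has maximum degree at most $1$, so each $G_i$ is a matching. The convention that templates contain no isolated vertices then forces every vertex of $G_i$ to have degree exactly $1$.

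Next, I would argue that the $K_2+K_1$-free condition, combined with being a matching, implies that each $G_i$ consists of a single edge. If a matching $G_i$ contained two distinct edges $\{u,v\}$ and $\{x,y\}$, then the vertex set $\{u,v,x\}$ would induce exactly the graph $K_2+K_1$ (the edge $uv$ is present, while $x$ is adjacent to neither $u$ nor $v$ since its unique matching partner is $y$). This contradicts the $K_2+K_1$-avoidance, so $G_i$ has at most one edge; being nonempty by our convention, it has exactly one edge.

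Since each template graph contributes exactly one edge and the partition must cover all $\binom{n}{2}$ edges of $K_n$, at least $\binom{n}{2}$ templates are needed. Conversely, partitioning $E(K_n)$ into its individual edges yields $\binom{n}{2}$ single-edge template graphs, each of which vacuously avoids $P_3$ and $K_2+K_1$ as induced subgraphs. This matching upper bound completes the argument. There is no real obstacle here: the statement is a straightforward corollary of the two previous subsections, and the only substantive step is verifying that a matching with at least two edges necessarily contains an induced $K_2+K_1$.
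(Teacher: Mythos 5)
Your proof is correct and follows essentially the same route as the paper, which simply asserts that the two forbidden graphs force every template to be a single edge and then counts edges; you have merely filled in the short verification (matching from $P_3$-freeness, then the induced $K_2+K_1$ on $\{u,v,x\}$ ruling out a second matching edge) that the paper labels ``easy to see.''
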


We have listed all possible cases among forbidden graphs with three vertices. Now we consider all subsets $\mathcal{H}$ of the set of $4$-vertex excluded subgraphs shown in Figure~\ref{kiz}.

\subsection{$P_4$-free bipartite graphs}
If $H=P_4$, then it is easy to check that $G_i$ is a {\bf vertex disjoint collection of complete bipartite graphs} for all $1 \le i \le k$. If we divide the complete graph into two (almost) equal sets and draw all edges between the two color classes in $G_1$, then there are $n^2/4$ edges and in $G_2$ we get two pieces of complete graphs with $n/2$ vertices where we can continue the division. (We do not require connectedness, so we can include more bipartite graphs in a $G_i$.)
It shows that if $n$ is a power of 2, then we can always divide the remaining graph into two parts, which yields $\log_2 n$ template graphs; and hence $\chi'_{P_4}(n)=\log_2 n$ by Observation~\ref{obs_trivi}.
In any other case, we can bound $\chi'_{P_4}(n)$ by $\chi'_{P_4}(2^{\lceil\log_2 n\rceil})$ using the monotonicity of $\chi'_{P_4}(n)$.

\begin{observation}
%	One needs $k=\log_2 n+o(\log_2 n)$ graphs in a $P_4$-free bipartite partition of $K_n$. Hence, $\chi'_{P_4}(n)=\log_2 n+o(\log_2 n)$.
If $H=P_4$, then $\chi'_{P_4}(n)=\lceil\log_2 n\rceil$.
\end{observation}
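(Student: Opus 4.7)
The plan is to assemble matching upper and lower bounds. The lower bound $\chi'_{P_4}(n)\ge\lceil\log_2 n\rceil$ is immediate from Observation~\ref{obs_trivi}, so the real work is to produce an $\lceil\log_2 n\rceil$-template $P_4$-avoiding partition of $K_n$.

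First I would verify the structural claim from the preceding paragraph: every induced-$P_4$-free bipartite graph is a vertex-disjoint union of complete bipartite graphs. The natural argument is to look at a single connected component with color classes $X,Y$, and observe that if some $x\in X$ and $y\in Y$ were non-adjacent, a shortest $x$-$y$ path would have odd length at least $3$, and then its first four vertices (by the shortest-path property, they induce only path edges) form an induced $P_4$. So each connected component is a single complete bipartite graph, as claimed.

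Next, for the upper bound, I would prove by induction on $k$ that $\chi'_{P_4}(2^k)\le k$. The base $k=0$ is trivial. For the step, split $V(K_{2^k})$ into two halves $A,B$ of size $2^{k-1}$; the cross-edges form a single complete bipartite graph, hence one $P_4$-free template. The edges inside $A$ and inside $B$ form two copies of $K_{2^{k-1}}$, which by induction can each be partitioned into $k-1$ $P_4$-free bipartite templates. I then merge the $i$-th templates from the two halves into a single bipartite graph by taking their vertex-disjoint union; the result is still a disjoint union of complete bipartite graphs on disjoint vertex sets, hence induced-$P_4$-free. This gives a total of $k$ templates.

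For general $n$, I would simply invoke Observation~\ref{obs_mono} to write $\chi'_{P_4}(n)\le\chi'_{P_4}(2^{\lceil\log_2 n\rceil})\le\lceil\log_2 n\rceil$. Combined with Observation~\ref{obs_trivi}, this gives equality. The only step with any content is the structural lemma characterizing connected induced-$P_4$-free bipartite graphs as complete bipartite; the recursive doubling construction and the monotonicity step are essentially bookkeeping, so I do not expect a real obstacle.
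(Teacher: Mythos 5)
Your proposal is correct and follows essentially the same route as the paper: recursive halving of $K_{2^k}$ where the cross-edges form one complete bipartite template and the two halves are handled in parallel, combined with Observation~\ref{obs_trivi} for the lower bound and Observation~\ref{obs_mono} for non-powers of two. The only addition is that you spell out the structural characterization of induced-$P_4$-free bipartite graphs via the shortest-path argument, which the paper dismisses as ``straightforward to check.''
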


If the graphs of the partition are required to be connected, we come to the assumption of the Graham-Pollak theorem.

\subsection{$C_4$-free bipartite graphs}
If $H=C_4$, then $G_i$ is a $C_4$-free bipartite graph for all $1 \le i \le k$. Since $|E(G_i)| \le c_1n^{3/2}$ (see Zarankiewicz \cite{zar}), we immediately get a lower bound $c_2n^{1/2} \le \chi'_{C_4}(n)$.

%However, the upper bound is still open. If we throw the point-edge graph of large independent projective planes onto the complete graph $K_n$, then the probability that an edge is uncovered is very small. However, if some edges are overcovered (covered more than once), then we can discard them from the graphs after the first cover. The standard use of the Chernoff bound would give the following, probably not sharp, upper bound: $\chi'_{C_4}(n) < \log n \cdot \sqrt n$.
However, the upper bound is still open. If we throw the point-line graph of large independent projective planes onto the complete graph $K_n$ $k$ times, then the probability that an edge is uncovered is about $(1-c/\sqrt{n})^k$. If some edges are over covered (covered more than once), then we simply delete the surplus from the graphs after the first cover. By setting $k=(3/c)\cdot\sqrt{n}\log n$ the Boole inequality readily gives the following, probably not sharp, upper bound: $\chi'_{C_4}(n) < \log n \cdot \sqrt n$, since

$${\mathrm{Pr}}(\mathrm{edge}\ e\ \mathrm{is\ not\ covered}) \leq (1-c/\sqrt{n})^k \leq
e^{-kc/\sqrt{n}}=n^{-3},$$
and so
$${\mathrm{Pr}}({\mathrm{some\ edge\ from}}\ E(K_n)\ \mathrm{is\ not\ covered}) \leq
\sum_1^{\binom{n}{2}} n^{-3} < n^{-1} <1,$$
provided $n$ is big enough.

\begin{theorem}
For some positive constants $a$ and $b$,
$$a\sqrt{n}\le\chi'_{C_4}(n)\le b\sqrt{n}\log n,$$
if $n$ is large enough.
\end{theorem}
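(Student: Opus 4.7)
The plan is to prove the two inequalities separately. The lower bound follows from the K\H ov\'ari--S\'os--Tur\'an (Zarankiewicz) bound on edges in $C_4$-free bipartite graphs; the upper bound formalises the probabilistic sketch already given in the excerpt, using the incidence graph of a projective plane.

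For the lower bound, I would recall that any $C_4$-free bipartite graph on $n$ vertices has at most $c_1 n^{3/2}$ edges. If $G_1,\dots,G_k$ is a $C_4$-avoiding bipartite partition of $K_n$, summing over $i$ gives
$$\binom{n}{2} \;=\; \sum_{i=1}^k |E(G_i)| \;\le\; k\cdot c_1 n^{3/2},$$
which rearranges to $k\ge a\sqrt{n}$ for some constant $a>0$ once $n$ is large.

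For the upper bound, I would fix a prime $q$ of order $\sqrt{n/2}$ (guaranteed for large $n$ by Bertrand's postulate) and take $L$ to be the bipartite Levi (incidence) graph of the projective plane of order $q$. Then $L$ has $2(q^2+q+1)\le n$ vertices, is $C_4$-free (any two lines meet in at most one point), and has $|E(L)|=(q+1)(q^2+q+1)=\Theta(n^{3/2})$ edges. Set $k:=\lceil(3/c)\sqrt{n}\log n\rceil$ for a suitable $c>0$ to be determined. Independently, for $i=1,\dots,k$, pick a uniformly random injection $\phi_i\colon V(L)\to V(K_n)$ and set $H_i:=\phi_i(L)$, which is bipartite and $C_4$-free. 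A short symmetry/counting argument (the transitivity of $S_n$ on pairs, applied edge-by-edge of $L$) shows that for any fixed $e\in E(K_n)$,
$$\Pr\bigl(e\in E(H_i)\bigr) \;=\; \frac{|E(L)|}{\binom{n}{2}} \;\ge\; \frac{c}{\sqrt{n}}.$$
By independence of the $\phi_i$, $\Pr(e\notin\bigcup_i E(H_i))\le(1-c/\sqrt n)^k\le e^{-3\log n}=n^{-3}$, and a union bound over the $\binom{n}{2}<n^2$ edges of $K_n$ yields positive probability that $\bigcup_i E(H_i)=E(K_n)$, so such a cover with $k=O(\sqrt n\log n)$ pieces exists.

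To convert the cover into a partition, I would process $H_1,\dots,H_k$ in order and delete from $H_i$ every edge already contained in some $H_j$ with $j<i$. Subgraphs of $C_4$-free bipartite graphs are themselves $C_4$-free bipartite, so the resulting graphs still lie in the template class and they now partition $E(K_n)$, giving $\chi'_{C_4}(n)\le k = O(\sqrt n\log n)$. Together with the lower bound this yields the theorem. The main obstacle I anticipate is purely bookkeeping: choosing the prime $q$ so that $|V(L)|\le n$ while keeping $|E(L)|/\binom{n}{2}$ bounded below by an explicit $c/\sqrt{n}$, and then tracking $c$ through the choice of $k$ so that the exponent $3\log n$ actually appears in the Boole-inequality step as anticipated in the excerpt's computation.
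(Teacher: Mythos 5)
Your proposal is correct and follows essentially the same route as the paper: the lower bound via the Zarankiewicz/K\H ov\'ari--S\'os--Tur\'an bound on $C_4$-free bipartite graphs, and the upper bound by randomly placing incidence graphs of projective planes, applying the union (Boole) bound with $k=\Theta(\sqrt n\log n)$, and deleting multiply-covered edges to turn the cover into a partition. Your version merely fills in details the paper leaves implicit (the choice of the prime order via Bertrand's postulate, the symmetry argument giving $\Pr(e\in E(H_i))=|E(L)|/\binom n2$, and the observation that edge-deletion preserves the template class), so no further comparison is needed.
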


We think that the next subsection is the most intriguing part of the paper, which illustrates that these types of questions can be difficult. The lower and upper bounds are very far apart, and the fact that there is no easy combinatorial proof of the Graham-Pollak theorem also shows the difficulty of the problem. However, it gives us a remarkable open problem.

\subsection{$2K_2$-free bipartite graphs}
If $H=2K_2$, then each $G_i$ is a {\bf Ferrers graph}. The next result gives a non-trivial lower and upper bound on $\chi'_{2K_2}(n)$.
\begin{theorem}\label{NVGtetel}
$\lfloor\log_2 n\rfloor + \frac{1}{4} \sqrt{\lfloor\log_2 n\rfloor}-1 < \chi'_{2K_2}(n)\leq 2\left\lceil\sqrt{n}\right\rceil-2$.
\end{theorem}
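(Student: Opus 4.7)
For the upper bound, by Observation~\ref{obs_mono} it suffices to exhibit a partition of $K_n$ into $2(m-1)$ Ferrers graphs when $n=m^2$ with $m=\lceil\sqrt n\rceil$. I label the vertices of $K_n$ as $v_{i,j}$ with $i,j\in\{1,\dots,m\}$ and propose the explicit families
$$R_k=\bigl\{\{v_{k,j_1},v_{i,j_2}\}:\ i>k,\ j_1\ge j_2\bigr\}\quad(k=1,\dots,m-1),$$
$$S_l=\bigl\{\{v_{i_1,l},v_{i_2,j}\}:\ j>l,\ i_1\le i_2\bigr\}\quad(l=1,\dots,m-1).$$
Taking $v_{k,1},v_{k,2},\dots,v_{k,m}$ as the ordering of the left class of $R_k$, one reads off the inclusion chain $N_{R_k}(v_{k,1})\subseteq N_{R_k}(v_{k,2})\subseteq\cdots\subseteq N_{R_k}(v_{k,m})$, so $R_k$ is Ferrers; the ordering $v_{m,l},v_{m-1,l},\dots,v_{1,l}$ likewise exhibits $S_l$ as Ferrers. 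A short case analysis on an arbitrary edge $\{v_{i_1,j_1},v_{i_2,j_2}\}$ (assuming $i_1\le i_2$) then shows that it lies in $R_{i_1}$ exactly when $i_1<i_2$ and $j_1\ge j_2$, and in $S_{j_1}$ exactly when $j_1<j_2$, and in no other member of the family. This gives an edge-partition of $K_n$ of size $2(m-1)=2\lceil\sqrt n\rceil-2$.

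For the lower bound, the plan is to sharpen the folklore $\lceil\log_2 n\rceil$ estimate from Observation~\ref{obs_trivi}. Given a hypothetical Ferrers partition $G_1,\dots,G_k$ of $K_n$, for each $G_i$ with bipartition $(A_i,B_i)$ fix the canonical nested orderings of both classes and assign to every vertex $v$ the refined $k$-tuple $\tau(v)$ whose $i$-th coordinate records the side of $G_i$ containing $v$ together with the rank of $v$ in the corresponding nested chain. By Ferrers nesting, two vertices with the same refined type in every $G_i$ would have identical neighborhoods in each $G_i$, hence in $K_n$, so $v\mapsto\tau(v)$ is injective; projecting to the side coordinate alone recovers $2^k\ge n$. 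To harvest the extra $\frac14\sqrt{\lfloor\log_2 n\rfloor}-1$ term I would apply the Graham--Pollak theorem to the implicit biclique refinement of the partition: every $G_i$ decomposes into $d_i$ bicliques where $d_i$ is the number of distinct row lengths of its Young diagram, so $\sum_{i=1}^k d_i\ge n-1$, and the area bound $d_i=O(\sqrt{|E(G_i)|})$ together with the edge budget $\sum_i|E(G_i)|=\binom n 2$ should deliver the desired $\Omega(\sqrt{\log n})$ gain after a careful Cauchy--Schwarz / convexity balancing against the $2^k\ge n$ constraint.

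The most delicate point for the upper bound is to verify that $\{R_k\}\cup\{S_l\}$ is a genuine edge-partition with no overlaps: the more symmetric ansatz that replaces $S_l$ by the disjoint union of stars within each row fails because a disjoint union of two non-trivial stars contains an induced $2K_2$ and is therefore not Ferrers, and it is precisely the asymmetric pairing of the weak inequality $j_1\ge j_2$ in $R_k$ with the strict inequality $j>l$ in $S_l$ that avoids every double-covered edge. The main obstacle for the lower bound is that a single Ferrers graph can carry up to $\lfloor n/2\rfloor\lceil n/2\rceil$ edges, so naive edge- or eigenvalue-counting does not beat the trivial $\log_2 n$ barrier; the improvement must come from quantifying how much each $G_i$ deviates from a biclique through its distinct-row-length parameter $d_i$ and then feeding this estimate into a Graham--Pollak style linear-algebraic inequality with the correct dependence on $k$.
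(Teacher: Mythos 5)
Your upper bound is correct and essentially identical to the paper's: your $R_k$ and $S_l$ are exactly the paper's descent graphs $G_i$ and ascent graphs $G'_j$ (with coordinates relabelled), the nested-neighborhood verification and the case analysis showing each edge lies in exactly one template are the same, and the reduction to square $n$ via Observation~\ref{obs_mono} is the same.

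The lower bound, however, has a genuine gap: the collection of inequalities you propose to combine is too weak to give \emph{any} improvement over $\log_2 n$, no matter how the Cauchy--Schwarz/convexity balancing is done. Your constraints are $k\ge\log_2 n$; $\sum_{i=1}^k d_i\ge n-1$ (Graham--Pollak applied to the biclique refinement -- this step is valid); $d_i\le\sqrt{2e_i}$ where $e_i=|E(G_i)|$ (the area bound, also valid, and essentially equivalent to Lemma~\ref{NVGlemma1} of the paper); and $\sum_i e_i=\binom n2$. But these are simultaneously satisfiable with $k=\lceil\log_2 n\rceil$: take two templates with $d_1=d_2=\lceil n/2\rceil$ and $e_1=e_2\approx n^2/8$, and distribute the remaining edges among the other $k-2$ templates as single bicliques (each with $d_i=1$). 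Every inequality on your list holds for this profile, and it is not a numerical artifact: the half-graph is a genuine Ferrers graph with about $n/2$ distinct row lengths and about $n^2/4$ edges. So no argument using only these quantities can force $k>\log_2 n+\Omega(\sqrt{\log n})$. The failure mode is that Graham--Pollak allows all of the mass of $\sum_i d_i$ to be concentrated on $O(1)$ templates. (A smaller slip: injectivity of your refined map $\tau$ does not imply injectivity of its projection to side-coordinates; but the trivial bound is folklore anyway, cf.\ Observation~\ref{obs_trivi}.)

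The missing idea is a mechanism that forces \emph{many} templates -- at least $k$ of them, not just one or two -- to contain large matchings. This is the paper's Lemma~\ref{NVGlemma2}: writing the number of templates as $k+d$ with $n=2^k$ and assigning to each vertex its side-indicator class-vector in $\{0,1\}^{k+d}$, one groups the vertices by their coordinates \emph{outside} any chosen set of $d+1$ templates; every edge inside a group must lie in one of the chosen $d+1$ templates, the groups together contain a matching of size at least $2^{k-2}$, and the pigeonhole principle puts a matching of size at least $\frac{2^{k-2}}{d+1}$ into one of them. Since this works for every choice of $d+1$ templates, at most $d$ templates lack such a matching, i.e.\ at least $k$ templates have matchings of size $\frac{2^{k-2}}{d+1}$. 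Feeding this into the quadratic edge bound for Ferrers graphs (your area bound, in matching form), those $k$ templates carry more than $\frac k2\left(\frac{2^{k-2}}{d+1}\right)^2$ edges in total, and comparing with $\binom{2^k}2<\frac12\cdot4^k$ forces $d+1>\frac{\sqrt k}4$. So you have, in effect, one of the two needed lemmas, but the combinatorial heart of the proof -- spreading large matchings over at least $k$ templates via the class-vector pigeonhole -- is absent, and Graham--Pollak cannot substitute for it.
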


Theorem \ref{NVGtetel} is proved in the next section.

\subsection{$S_4$-free bipartite graphs}
If $H=S_4$, then the maximum degree of each $G_i$ is at most 2. Therefore, we need at least $\lceil(n-1)/2\rceil$ template graphs. On the other hand, Walecki \cite{wal} showed that $K_n$ can be decomposed into $(n-1)/2$ Hamiltonian cycles for odd $n$, yielding the upper bound $(n-1)/2$. Therefore the exact result is $\chi'_{S_4}(n)=\lceil(n-1)/2\rceil$. (For even $n$, the upper bound follows from the inequality $\chi'_{S_4}(n)\le\chi'_{S_4}(n+1)$.)

\begin{observation}
If $H=S_4$, then $\chi'_{S_4}(n)=\lceil(n-1)/2\rceil$.
\end{observation}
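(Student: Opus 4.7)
The plan is to prove the two matching bounds $\chi'_{S_4}(n)\ge\lceil(n-1)/2\rceil$ and $\chi'_{S_4}(n)\le\lceil(n-1)/2\rceil$ separately.

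For the lower bound, I first translate the $S_4$-freeness assumption on a template $G_i$ into a maximum-degree bound. In any bipartite graph the neighbours of a vertex $v$ all lie in the opposite colour class and so form an independent set; hence if $\deg_{G_i}(v)\ge 3$, then $v$ together with any three of its neighbours induces a copy of $S_4=K_{1,3}$. Therefore every template $G_i$ must satisfy $\Delta(G_i)\le 2$. Since every vertex of $K_n$ has degree $n-1$ and each template can contribute at most $2$ to this degree, any partition must use at least $\lceil(n-1)/2\rceil$ templates.

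For the upper bound I handle the two parities separately. When $n$ is odd, I appeal to Walecki's classical theorem that $K_n$ decomposes into $(n-1)/2$ Hamiltonian cycles. Each such cycle has maximum degree $2$ and hence, by the degree characterisation from the lower-bound argument, is induced-$S_4$-free, giving an $S_4$-avoiding bipartite partition of the required size. When $n$ is even, no new construction is needed: the monotonicity Observation~\ref{obs_mono} gives $\chi'_{S_4}(n)\le\chi'_{S_4}(n+1)=n/2=\lceil(n-1)/2\rceil$, applying the odd-$n$ result to $n+1$.

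The main point requiring extra care is the validity of the odd-$n$ construction as a bipartite partition, since a Hamiltonian cycle $C_n$ on an odd vertex set is itself an odd cycle. I would verify directly that the framework in the preceding sections allows a max-degree-$2$ graph to play the role of a bipartite template here (its $S_4$-freeness being the operative constraint), or, if strict bipartiteness of each $G_i$ is required, adapt Walecki's decomposition by swapping a single edge between pairs of cycles to break the odd parity without changing any vertex degree. The remaining steps (degree counting and the monotonicity invocation) are essentially bookkeeping.
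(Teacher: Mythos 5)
Your argument is, in its core, exactly the paper's proof: translate induced-$S_4$-freeness of a bipartite template into $\Delta(G_i)\le 2$, count degrees at a vertex for the lower bound, use Walecki's Hamiltonian-cycle decomposition for odd $n$, and invoke Observation~\ref{obs_mono} for even $n$. The interesting part is the caveat you raise at the end, and you are right to raise it: under the paper's own definition every template must be a \emph{bipartite} graph, and a Hamiltonian cycle of $K_n$ with $n$ odd is an odd cycle, hence not bipartite. (The paper silently glosses over this point.) However, neither of your two escape routes works. Route (a) fails because the definition is explicit that templates are bipartite, so $\Delta\le2$ alone does not qualify a graph. Route (b), the degree-preserving edge swap, fails for a parity reason that no amount of cleverness can evade: if $K_n$ with $n$ odd were partitioned into $(n-1)/2$ templates of maximum degree at most $2$, then, since every vertex has even degree $n-1$, each template would have to contribute exactly $2$ at every vertex, i.e.\ each template would be a spanning $2$-regular graph; its cycle lengths sum to the odd number $n$, so some component is an odd cycle and the template is not bipartite. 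Any swap ``without changing any vertex degree'' keeps every template spanning $2$-regular and thus reproduces exactly this obstruction.

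The honest conclusion is that, read strictly, the claimed value is off by one for odd $n$: the argument above gives $\chi'_{S_4}(n)\ge(n+1)/2$ for odd $n\ge3$ (sanity check: $K_3$ is not bipartite, so $\chi'_{S_4}(3)=2$, not $1$), and this is sharp, so $\chi'_{S_4}(n)=\lceil n/2\rceil$ for all $n$ under the bipartite convention. For even $n$ one should \emph{not} argue via monotonicity from the (broken) odd case, as both you and the paper do, but directly: Walecki decomposes $K_n$, $n$ even, into $n/2-1$ Hamiltonian cycles, which are now \emph{even} cycles and hence bipartite, plus a perfect matching, giving $\chi'_{S_4}(n)=n/2=\lceil(n-1)/2\rceil$ for even $n$; then for odd $n$ monotonicity yields $\chi'_{S_4}(n)\le\chi'_{S_4}(n+1)=(n+1)/2$, matching the corrected lower bound. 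So your proposal faithfully reproduces the paper's argument and correctly locates its weak point, but the patch you sketch cannot be made to work, and the statement as printed holds for odd $n$ only if non-bipartite templates of maximum degree $2$ are admitted.
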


\begin{remark}
The existence of a partition into Hamilton cycles in arbitrary graphs is an NP-complete problem. Even in 4-regular graphs.
\end{remark}

\subsection {$\{2K_2, C_4\}$-free bipartite graphs}
If we avoid $2K_2$ and $C_4$ together, then the subgraphs $G_i$ are connected and do not contain any induced cycle, because $C_6, C_8, \dots$ contain induced $2K_2$. Hence, the template graphs are trees with $n-1$ edges and we get the lower bound $\left\lceil n/2 \right\rceil$. Moreover, the trees are $P_4$-free, hence they are {\bf double stars}.

On the other hand, if we use double stars for partitioning $K_n$, this is enough, as we can see in Figure \ref{doub} for $n=6$ and a similar construction works for any even $n$.
(The exact upper bound for odd $n$ follows from the monotonicity of $\chi'_{\{2K_2, C_4\}}(n)$.)

\begin{theorem} \label{ds}
	If $\mathcal{H}=\{2K_2, C_4\}$, then $\chi'_{\{2K_2, C_4\}}(n)= \left\lceil n/2 \right\rceil$.
\end{theorem}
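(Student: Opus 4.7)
The plan is to establish the two bounds separately. For the lower bound I will show that every template $G_i$ must be a tree; summing edge counts over $K_n$ then gives $k(n-1) \ge \binom{n}{2}$, hence $k \ge \lceil n/2 \rceil$. For the upper bound it suffices to decompose $K_n$ into $n/2$ double stars when $n$ is even, because the odd case then follows from Observation~\ref{obs_mono} via $\chi'_{\{2K_2,C_4\}}(2m-1) \le \chi'_{\{2K_2,C_4\}}(2m) = m$.

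To prove the tree claim, I would use that each $G_i$ is bipartite, $C_4$-free, and $2K_2$-free. The shortest cycle in any graph is induced and in a bipartite graph has even length, so any cycle in $G_i$ would produce a forbidden induced $C_4$ or an induced $C_{2\ell}$ with $\ell \ge 3$; in the latter case two alternate edges, say $v_1v_2$ and $v_4v_5$, induce a forbidden $2K_2$ on their four vertices. Hence $G_i$ is a forest. If it had two non-trivial components, an edge from each would again induce a $2K_2$, so by the convention forbidding isolated vertices $G_i$ is a tree, with at most $n-1$ edges. (For the refined upper-bound construction one also notes that such a tree has diameter at most $3$, i.e.\ is a double star, but this is not needed for the lower bound itself.)

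For the upper bound in the even case $n = 2m$, I would try the cyclic construction: label $V(K_{2m}) = \mathbb{Z}_{2m}$ and, for each $i \in \{0,\dots,m-1\}$, let $T_i$ be the spanning double star with central edge $\{i, i+m\}$ in which a vertex $j \notin \{i, i+m\}$ is attached to $i$ whenever $(j-i) \bmod 2m \in \{1,\dots,m-1\}$ and to $i+m$ otherwise. The remaining task is to verify that every edge of $K_{2m}$ is covered exactly once, which reduces to a short case analysis on the two residues $i$ can take for a given edge $\{j,k\}$, powered by the cyclic symmetry $T_{i+1} = T_i + 1$ (addition in $\mathbb{Z}_{2m}$). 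The only mildly fiddly point is this modular bookkeeping; I do not anticipate any real obstacle, since the lower bound is structural and immediate and the construction is essentially forced once one decides that the central edges of the $m$ double stars should form a perfect matching of $K_{2m}$.
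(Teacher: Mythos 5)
Your proposal is correct and takes essentially the same route as the paper: the lower bound by showing every template is a tree (acyclic because an induced $C_{2\ell}$, $\ell\ge 3$, contains an induced $2K_2$; connected by $2K_2$-freeness), and the upper bound by a double-star decomposition of $K_{2m}$ whose central edges form a perfect matching, with odd $n$ handled by monotonicity. Your cyclic construction does check out --- an edge $\{j,k\}$ with $(k-j)\bmod 2m = d \in \{1,\dots,m-1\}$ lies in exactly one template, namely $T_j$ if $0\le j\le m-1$ and $T_{j-m}$ otherwise, while the edges of difference $m$ are precisely the central edges --- so the case analysis you deferred indeed closes; the paper itself merely exhibits the decomposition in a figure for $K_6$.
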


\begin{figure}[htbp]
	\centering
	\includegraphics[scale=0.41]{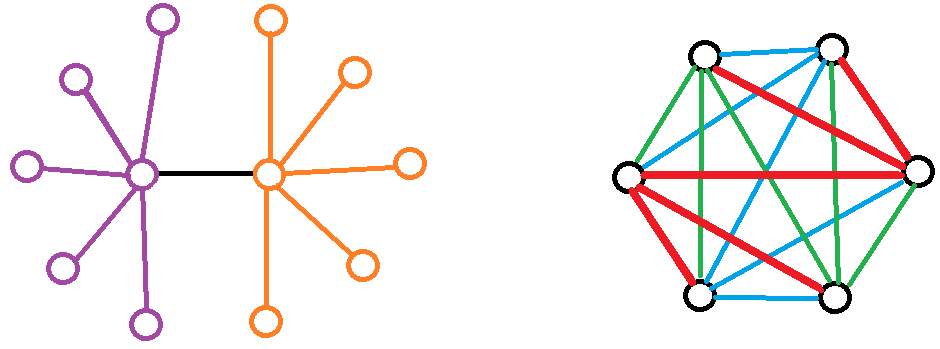}
	\caption {Double star partition of $K_6$.}
	\label{doub}
\end{figure}

\subsection {$\{2K_2, C_4, P_4\}$-free bipartite graphs}
If we also omit $P_4$, then double stars are not allowed, just simple {\bf stars}. What remains is a star decomposition which comes from the well-known solution of the Graham-Pollak theorem, and gives $n-1$ as the best possible result.

\begin{figure}[!t]
	\centering
	\includegraphics[scale=0.41]{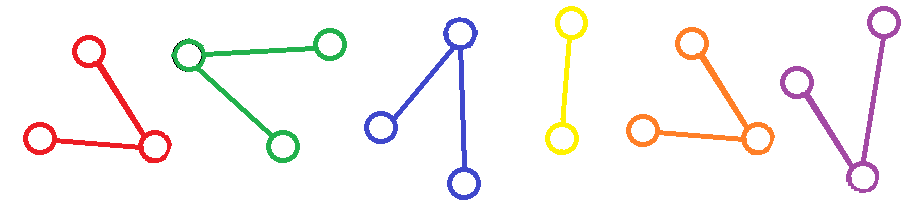}
	\caption {Cherry Orchard.}
	\label{cher}
\end{figure}

\subsection {$\{C_4, P_4, S_4\}$-free bipartite graphs}
If $P_4, S_4, C_4$ are the excluded subgraphs and the $G_i$ graphs are bipartite, then the maximum degree is two and there is no cycle of any length in $G_i$. Therefore, each $G_i$ consists of single edge and $P_3$ components, that is, the $G_i$ graphs are a set of {\bf disjoint cherries} (we will call them {\bf cherry orchards}), for all $i$. One can see a single cherry orchard in Figure \ref{cher}. In a cherry orchard there are at most $\frac{2}{3}n$ edges. The lower bound to $\chi'_{\mathcal{H}}(n)$ comes from simply counting the edges: ${n \choose 2}/\frac{2n}{3} \approx \frac{3}{4}n$.

On other side we can reach the $\frac{3}{4}n+o(n)$ upper bound in the following way. We divide the vertices of $K_n$ into three (almost) equal parts, let us call them $A, B$ and $C$, all containing $n/3$ vertices. The edges between each pair of sets can be divided into $n/3$ perfect matchings.
We divide these $3 \cdot \left(\frac{n}{3}\right)^2$ edges into cherry orchards. First, we consider three cherry orchards, each with $\frac{2n}{3}$ edges. The first is the union of a perfect matching from $(A,B)$ and $(A,C)$. The other two are the unions of perfect matchings from $(B,A)$ and $(B,C)$; and from $(C,A)$ and $(C,B)$, respectively.
Then we remove these three cherry orchards, and all degrees of vertices in the sets decreased by exactly four. 
We do the same in the next steps, where each step consists of three cherry orchards, until we run out of edges. With this procedure three cherry orchards decreases the degrees of vertices by four and we started with $\frac{2n}{3}$ degrees at each vertex at the beginning. So with $\frac{2n/3}{4}\cdot 3=\frac{n}{2}$ cherry orchards we covered all the edges between the parts.

Then we apply the same procedure recursively on the sets $A, B$ and $C$. This can be done on each set separately. So, if $T(n)$ denotes the number of cherry orchards needed, then
$$T(n) \leq \frac{1}{2}n + T\left(\frac{n}{3}\right) = \frac{1}{2}n + \frac{1}{6}n + T\left(\frac{n}{9}\right)$$
$$= \left(\frac{1}{2} + \frac{1}{6} + \frac{1}{18} +\dots \right)n = \frac{n}{2}\left(1+\frac{1}{3}+\frac{1}{9}+\dots\right) = \frac{3}{4}n+o(n).$$

\begin{theorem}[Cherry Orchard]
	If $\mathcal{H}=\{P_4, C_4, S_4\}$, then
	$\chi'_{\mathcal{H}}(n)=\frac{3}{4}n+o(n).$
\end{theorem}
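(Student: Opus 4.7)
The plan is to sandwich $\chi'_{\mathcal H}(n)$ between bounds of the form $\tfrac{3n}{4}+o(n)$: a lower bound from the edge-density cap of a single cherry orchard, and an upper bound from a recursive tripartite construction.

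First I would pin down the templates. A bipartite graph with no isolated vertex that avoids induced $S_4$ has maximum degree at most $2$, so each connected component is a path or a (necessarily even) cycle. Avoiding $C_4$ rules out $4$-cycles, and any longer even cycle contains an induced $P_4$, so no cycle components occur. Avoiding $P_4$ then forces each component to have at most $3$ vertices, leaving only single edges $K_2$ and cherries $P_3$. Hence each template $G_i$ on $n_i$ vertices has at most $\tfrac{2}{3}n_i\le\tfrac{2n}{3}$ edges, and comparing with $|E(K_n)|=\binom{n}{2}$ yields the lower bound
\[
\chi'_{\mathcal H}(n)\ \ge\ \frac{\binom{n}{2}}{2n/3}\ =\ \frac{3(n-1)}{4}.
\]

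For the upper bound I would partition $V(K_n)$ into three nearly-equal classes $A,B,C$ of size $\lceil n/3\rceil$ or $\lfloor n/3\rfloor$ and decompose each of the complete bipartite graphs $K_{A,B}$, $K_{A,C}$, $K_{B,C}$ into (near-)perfect matchings. The cross-edges are then processed in rounds of three orchards: round $t$ uses two fresh matchings at each class to build one cherry orchard with cherry centers in $A$ (arms to $B$ and $C$), one with centers in $B$, and one with centers in $C$. Every such round lowers the cross-degree of every vertex by exactly $4$, so after about $n/6$ rounds, i.e.\ $\approx n/2$ orchards, all cross-edges are covered. Recursing on $K_n|_A, K_n|_B, K_n|_C$ in parallel yields
\[
T(n)\ \le\ \frac{n}{2}+T\!\left(\left\lceil\tfrac{n}{3}\right\rceil\right),
\]
whose solution is $T(n)=\tfrac{n}{2}\sum_{k\ge 0} 3^{-k}+o(n)=\tfrac{3n}{4}+o(n)$.

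The main nuisance (rather than a deep obstacle) is handling divisibility: class sizes and the number of available matchings between a given pair of classes are not always even, so the final round of each level may leave a small residue of uncovered edges. These can be mopped up with at most a constant number of extra orchards per level of recursion, and the loss is absorbed into the $o(n)$ term; alternatively, Observation~\ref{obs_mono} lets one pad $n$ to a convenient value before running the construction. Combining the asymptotic upper bound with the lower bound $\tfrac{3(n-1)}{4}$ above gives $\chi'_{\mathcal H}(n)=\tfrac{3n}{4}+o(n)$, as desired.
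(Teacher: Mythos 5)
Your proposal is correct and follows essentially the same route as the paper: the identical edge-counting lower bound $\binom{n}{2}/(2n/3)$, and the same upper-bound construction partitioning $V(K_n)$ into three nearly equal classes, pairing matchings into cherry orchards centered alternately in $A$, $B$, $C$ so that each round of three orchards reduces cross-degrees by $4$, followed by the recursion $T(n)\le \frac{n}{2}+T(n/3)=\frac{3}{4}n+o(n)$. Your explicit treatment of the divisibility residues is a slight refinement of the paper's argument, which leaves those details implicit.
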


\subsection {$\{C_4, P_4\}$-free bipartite graphs}
If we do not require connectedness or maximum degree at most 2 (since we do not forbid $2K_2$ or $S_4$), then each template graph $G_i$ can consist of {\bf arbitrarily many star components} with arbitrary degrees. We call them {\bf star orchards}.
A trivial lower bound $\lceil\frac{n}{2}\rceil$ comes from the fact that the template graphs contain at most $n-1$ edges.

Assume first that $n$ is even. An upper bound can be derived from the former double star construction (see Figure~\ref{doub}):
If we delete the central edge of each double star, then we have two disjoint stars in each graph $G_i$ without any induced $P_4$. The remaining extra edges form a matching that can be considered as a set of disjoint stars and the last template graph in the partition. This gives the upper bound $\frac n2+1$.
In fact, this is the exact value of $\chi'_{\{P_4, C_4\}}(n)$ for even $n$:
observe that at most one (star orchard) template graph can have $n-1$ edges, i.e.\ at most one template graph can be a single star on $n$ vertices, because the center of one such star cannot be contained in any other template graph.
This means that the trivial lower bound $\frac{n}{2}$ cannot be sharp for even $n$, if $n\ge4$, because each template would have exactly $n-1$ edges in a star orchard partition of $K_n$ with $\frac{n}{2}$ template graphs.
So $\chi'_{\{P_4, C_4\}}(n)=\frac n2+1$ for even $n\ge4$.

It turns out that $\chi'_{\{P_4, C_4\}}(n)=\lceil\frac n2\rceil+1$ holds for odd $n\ge5$ as well. This case can be reduced to the case of even $n$.

If $n$ is odd, then a minimal star orchard partition of $K_{n-1}$ can be extended to a star orchard partition of $K_n$ with $\frac{n-1}2+2=\lceil\frac n2\rceil+1$ template graphs by adding an $n$-vertex single star template.
However, $\lceil\frac{n}{2}\rceil$ templates are not enough. This is because $K_n$ cannot be partitioned into $\lceil\frac n2\rceil$ star orchards such that each template graph has at most $n-2$ edges,
as then the total number of edges in the templates would be at most
$$\left\lceil\frac n2\right\rceil\cdot(n-2)=\frac {n+1}2\cdot(n-2)<\frac{n(n-1)}2=E(K_n).$$
Finally, for odd $n\ge5$, $K_n$ cannot be partitioned into $\lceil\frac n2\rceil$ star orchards such that one of them is a single $n$-vertex star,
because the removal of the $n$-vertex star template would yield a star orchard partition of $K_{n-1}$ with $\lceil\frac n2\rceil-1=\frac{n-1}2$ templates,
which is impossible as seen in the previous paragraph. So we obtained the following:
\begin{theorem}
	If $\mathcal{H}=\{P_4, C_4\}$, then $\chi'_{\mathcal{H}}(n) = \lceil\frac{n}{2}\rceil+1$, for $n\ge4$.
\end{theorem}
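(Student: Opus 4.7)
The plan is to prove matching upper and lower bounds of $\lceil n/2\rceil+1$, after first pinning down the structure of the templates. A bipartite $\{P_4,C_4\}$-free graph is necessarily acyclic, because any bipartite cycle of shortest length $\ge 4$ is induced and is either a $C_4$ or contains an induced $P_4$ on four consecutive vertices; combined with $P_4$-freeness, each connected component then has diameter at most $2$ and is a star. So every template in such a partition is a ``star orchard'' in the sense used in the paper, and each template has at most $n-1$ edges.

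For the upper bound in the even case, I would start from the $\{2K_2,C_4\}$-free (double-star) partition of $K_n$ guaranteed by Theorem~\ref{ds}, which uses $n/2$ templates. Deleting the central edge of each double star turns that template into two disjoint stars, hence a valid star orchard; the $n/2$ removed central edges jointly form a matching, which is itself a star orchard and becomes the extra $\bigl(\tfrac{n}{2}+1\bigr)$-th template. The odd case $n\ge 5$ follows by extension: take a star-orchard partition of $K_{n-1}$ with $\tfrac{n-1}{2}+1$ templates, then add one spanning-star template centered at the new vertex to cover the remaining $n-1$ edges, producing $\lceil n/2\rceil+1$ templates in total.

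The lower bound is the more delicate part. The key observation is that a star-orchard template on $V(K_n)$ has $n-1$ edges only if it is a single star spanning all $n$ vertices, and at most one template in any partition can be such a spanning star, because two spanning stars would need distinct centers whose incident ``central'' edges would both belong to the partition. For even $n\ge4$, using only $n/2$ templates would force every template to carry exactly $n-1$ edges in order to exhaust $\binom{n}{2}$, contradicting the uniqueness of the spanning-star template. For odd $n\ge5$, a hypothetical partition with $\lceil n/2\rceil=\tfrac{n+1}{2}$ templates in which every template has at most $n-2$ edges is ruled out by the edge-count $\tfrac{(n+1)(n-2)}{2}<\binom{n}{2}$; hence some template must be a full $n$-star, and removing it leaves a star-orchard partition of $K_{n-1}$ into $\tfrac{n-1}{2}$ pieces, contradicting the even case just established.

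The main obstacle I expect is the odd case of the lower bound: the straightforward edge count gives only $\lceil n/2\rceil$, so strengthening it by one requires the two-step argument above — first forcing the existence of a spanning star via a refined edge count, then invoking the even-$n$ bound on the residual $K_{n-1}$. The rest is bookkeeping once the star-orchard characterization is in hand.
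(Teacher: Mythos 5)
Your proposal is correct and follows essentially the same route as the paper's own argument: the double-star construction with central edges removed (plus the matching as an extra template) for the even upper bound, the uniqueness of a spanning-star template combined with the exact edge count for the even lower bound, and the reduction of the odd case to the even case in both directions. The only cosmetic difference is your justification for why two spanning stars cannot coexist (shared edge between the two centers, versus the paper's observation that the center of one star cannot appear in any other template), which is an equally valid one-line argument.
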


%We can easily improve on the upper bound by using $\mathcal{H}=\{P_4, C_4, S_t\}$ as excluded graphs, where we increase $t$ to $\infty$. For fixed $t$ we get from a similar counting (by dividing the vertices of $K_n$ into $t-1$ equal parts and the recursion $T(n) \leq \frac{1}{2}n + T(\frac{n}{t-1})$ gives) that $\chi'_{\mathcal{H}}(n) \leq \frac{t-1}{2t-4}n$, which suggests that the upper bound is around $\frac{n}{2}$.

%\subsection{Some further examples}
The discussion of further examples will be less detailed. For example, we only include results where $n$ meets some ``natural'' divisibility conditions. 
%(The omitted cases are left to the reader as exercises.)

\subsection {$\{P_4, 2K_2\}$-free bipartite graphs} Here the template graphs can be bicliques only. Thus, we get back the Graham-Pollak theorem.
	\begin{observation}
		If $\mathcal{H}=\{P_4, 2K_2\}$, then 
		$\chi'_{\mathcal{H}}(n)=n-1.$
	\end{observation}

\subsection {$\{2K_2, S_4\}$-free bipartite graphs} In this case, the templates can be only $C_4, P_4, P_3$ or $K_2$. The trivial lower bound can be attained
by the $C_4$-partition of $K_n$ (with divisibility conditions on $n$), see~\cite{SE}.
	\begin{observation}
		If $\mathcal{H}=\{S_4, 2K_2\}$, then
		$\chi'_{\mathcal{H}}(n)=\frac{n(n-1)}{8}$, when $n\equiv1\mod 8$.
	\end{observation}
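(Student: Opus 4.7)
The plan is to first pin down the template class, then obtain a matching lower and upper bound. For the first step, I would argue that forbidding $S_4 = K_{1,3}$ forces the maximum degree of every template to be at most $2$, so every template (being bipartite, with no isolated vertices by the Convention) is a vertex-disjoint union of paths and even cycles. The additional requirement of being induced-$2K_2$-free rules out two nontrivial components (they would directly exhibit an induced $2K_2$), and also eliminates $P_k$ for $k \ge 5$ (its two endpoint edges form an induced $2K_2$) and $C_{2k}$ for $k \ge 3$ (two suitably chosen disjoint edges likewise). What remains are exactly $K_2, P_3, P_4, C_4$, as the subsection title indicates.

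Next, each allowed template has at most $4$ edges (attained by $P_4$ and $C_4$), so any $\mathcal{H}$-avoiding partition of $K_n$ uses at least $\binom{n}{2}/4 = n(n-1)/8$ templates; this quantity is an integer whenever $n \equiv 1 \pmod 8$, giving the lower bound $\chi'_{\mathcal{H}}(n) \ge n(n-1)/8$ in the stated range of $n$.

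For the matching upper bound, I would invoke the classical $C_4$-decomposition theorem for complete graphs from combinatorial design theory: $K_n$ admits an edge-partition into $4$-cycles precisely when $n \equiv 1 \pmod 8$ (see \cite{SE}). Any such decomposition is automatically an $\{S_4, 2K_2\}$-avoiding bipartite partition, and it uses exactly $n(n-1)/8$ templates, matching the lower bound. The only nontrivial ingredient, and thus the main ``obstacle,'' is the existence of this $C_4$-decomposition, which I would cite as a black-box design-theoretic result rather than reprove; the rest of the argument is routine edge-counting together with the structural characterization from the first step.
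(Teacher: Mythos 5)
Your proposal is correct and follows essentially the same route as the paper: the structural reduction of the templates to $K_2$, $P_3$, $P_4$, $C_4$, the trivial edge-counting lower bound $\binom{n}{2}/4=n(n-1)/8$, and the matching upper bound via the classical $C_4$-decomposition of $K_n$ for $n\equiv1\pmod 8$, cited from the same source \cite{SE}. One trivial slip: $P_4$ has $3$ edges, not $4$, so the maximum of $4$ edges per template is attained only by $C_4$; this does not affect the argument.
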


\subsection {$\{2K_2, S_4, P_4\}$-free bipartite graphs} Now the templates are only $C_4, P_3$ or $K_2$. 
An optimal partition is given by the same construction as in the previous case (for the same divisibility condition).

\begin{observation}
	If $\mathcal{H}=\{S_4, 2K_2, P_4\}$, then
	$\chi'_{\mathcal{H}}(n)=\frac{n(n-1)}{8}$, when $n\equiv1\mod 8$.
\end{observation}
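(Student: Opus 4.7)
My plan is to match the upper and lower bounds on $\chi'_{\mathcal{H}}(n)$, both of which follow quickly once the admissible templates have been classified.

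First I would classify the bipartite graphs that avoid $\mathcal{H}=\{S_4, 2K_2, P_4\}$ as induced subgraphs. Avoiding $S_4$ caps the maximum degree at $2$, so every connected component is a path or an even cycle. Among paths, $P_k$ for $k\ge 4$ contains an induced $P_4$; among even cycles, $C_{2k}$ for $k\ge 3$ contains an induced $2K_2$ (take two edges that are separated by at least one vertex on each side of the cycle, e.g.\ $v_1v_2$ and $v_4v_5$ in $C_{2k}$). The cycle $C_4$ and the shorter components $P_3$ and $K_2$ all avoid the entire family $\mathcal{H}$. Moreover, any template with two or more components would contain an induced $2K_2$ (pick one edge from each of two components), so every template must in fact be connected. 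Consequently, every admissible template is a single $K_2$, $P_3$, or $C_4$, and in particular has at most $4$ edges.

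For the lower bound, a direct edge count then gives
\[
\chi'_{\mathcal{H}}(n)\;\ge\;\frac{|E(K_n)|}{4}\;=\;\frac{n(n-1)}{8},
\]
with equality only if every template is a copy of $C_4$. For the matching upper bound I would invoke exactly the construction cited in the preceding subsection: by \cite{SE}, $K_n$ admits a partition into $n(n-1)/8$ copies of $C_4$ whenever $n\equiv 1\pmod 8$, and each such $C_4$ is obviously $\mathcal{H}$-avoiding (it is in particular $P_4$-free). The two bounds coincide, which yields the claimed equality.

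I do not expect any serious obstacle: the argument is an edge count after a one-line classification of templates. The only point worth emphasizing is that $2K_2$-avoidance is what forces both connectedness and the ``shortness'' of the component, collapsing the class of possible templates to just $K_2$, $P_3$, and $C_4$; everything else is a direct application of the $C_4$-decomposition result used in the preceding subsection.
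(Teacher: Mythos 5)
Your proof is correct and follows essentially the same route as the paper: classify the admissible templates as $K_2$, $P_3$, or $C_4$ (so each has at most $4$ edges), obtain the lower bound $\binom{n}{2}/4=n(n-1)/8$ by edge counting, and match it with the $C_4$-decomposition of $K_n$ for $n\equiv 1\pmod 8$ cited in \cite{SE}. Your write-up merely makes explicit the classification and counting steps that the paper leaves implicit.
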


\subsection {$\{C_4, S_4\}$-free bipartite graphs}
In this case the maximum degree is at most $2$ in each template (and there are no $C_4$ components). The Hamilton cycle decomposition (Walecki construction) of $K_n$ is optimal here, as in case of $S_4$-free partitioning.
\begin{observation}
	If $\mathcal{H}=\{C_4, S_4\}$, then
	$\chi'_{\mathcal{H}}(n)=\left\lceil\frac{n-1}{2}\right\rceil$, when $n>4$.
\end{observation}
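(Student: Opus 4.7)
The plan is to recycle the $S_4$-free analysis on both sides. The lower bound $\lceil(n-1)/2\rceil$ comes for free from the earlier observation for $\chi'_{S_4}(n)$: since $S_4\in\mathcal{H}$, each template has maximum degree at most $2$, and the standard edge-count at a fixed vertex of $K_n$ already forces at least $\lceil(n-1)/2\rceil$ templates.

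For the matching upper bound, I would reuse the Walecki-type Hamilton decomposition of $K_n$ used in the $S_4$-free case. When $n$ is odd, this decomposes $K_n$ into $(n-1)/2$ edge-disjoint Hamilton cycles of length $n\ge 5$. When $n$ is even (so $n\ge 6$), the classical variant yields $(n-2)/2$ Hamilton cycles of length $n$ together with a single perfect matching, for a total of $n/2=\lceil(n-1)/2\rceil$ templates. Every template is then either a long cycle $C_n$ with $n\ge 5$ or a matching, both of which plainly have maximum degree at most $2$, hence are $S_4$-free.

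The only new point compared to the $S_4$-free argument is verifying that $C_n$ with $n\ge 5$ contains no induced $C_4$. This is a short direct check: any four vertices of a cycle $C_n$ of length $\ge 5$ span at most three cycle-edges, because the cyclic adjacency among them misses at least one consecutive pair, so they cannot induce a $4$-cycle. The perfect-matching template is trivially $C_4$-free. Therefore every template in the construction lies in the allowed class, and combining the two bounds yields $\chi'_{\{C_4,S_4\}}(n)=\lceil(n-1)/2\rceil$ for $n>4$.

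I do not expect a serious obstacle here. The role of the hypothesis $n>4$ is purely to keep the length-$n$ Hamilton cycles from being $C_4$'s themselves: the case $n=4$ breaks the even-$n$ construction, because the single Hamilton cycle in the decomposition of $K_4$ would be a forbidden $C_4$, so a separate (and different) argument would be needed there.
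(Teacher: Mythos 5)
Your proof is correct and follows essentially the paper's own route: the lower bound is the same degree count inherited from the $S_4$-free analysis, and the upper bound is the Walecki Hamilton-cycle decomposition, which is exactly what the paper invokes (its $\{C_4,S_4\}$ subsection simply points back to the $S_4$-free case). The one genuine difference is how even $n$ is handled: the paper uses monotonicity, $\chi'_{\mathcal H}(n)\le\chi'_{\mathcal H}(n+1)$ (Observation~\ref{obs_mono}), i.e.\ it restricts an optimal partition of $K_{n+1}$ to $n$ vertices, whereas you decompose $K_n$ directly into $(n-2)/2$ Hamilton cycles plus a perfect matching. Both give $n/2$ templates; your variant is more explicit, and it has the incidental advantage that its templates (even cycles and a matching) are honestly bipartite. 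That is worth noting because the odd-$n$ step---in your write-up and in the paper alike---uses odd Hamilton cycles, which are not bipartite graphs and so, read literally against the paper's definition of an $\mathcal H$-avoiding \emph{bipartite} partition, are not admissible templates; this defect is inherited from the paper's $S_4$-free subsection rather than introduced by you, but it does mean that for odd $n$ neither argument is airtight under the paper's own conventions. Finally, your explicit check that $C_n$ with $n\ge5$ contains no induced $C_4$ (any four vertices of a long cycle induce a disjoint union of paths, hence at most three edges) is a detail the paper leaves implicit, and your reading of the hypothesis $n>4$ is accurate: it protects the construction, whose even case would degenerate at $n=4$ to a single forbidden $C_4$ plus a matching.
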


\subsection {$\{P_4, S_4\}$-free bipartite graphs} Here, the templates are disjoint $C_4$ components (and their induced subgraphs), call them $C_4$ orchards.
Danzinger et al. \cite{Danzig} showed that if $n$ is divisible by $4$, then $K_n$ can be partitioned into one perfect matching and $n/2-1$ graphs $(n/4)C_4$, where $(n/4)C_4$ denotes the vertex-disjoint union of $n/4$ cycles $C_4$;
see Theorem 2.2 in \cite{Danzig}.
This is a minimal $\{P_4, S_4\}$-free bipartite partition of $K_n$ with $n/2$ templates.
	\begin{observation}
		If $\mathcal{H}=\{P_4, S_4\}$, then
		$\chi'_{\mathcal{H}}(n)=\frac{n}{2}$, when $n$ is divisible by $4$.
	\end{observation}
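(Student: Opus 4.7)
The plan is to combine a short edge-counting lower bound with the Danzinger et al.\ decomposition cited in the excerpt. First I would characterize the admissible template graphs. As already observed in the $P_4$-subsection, a bipartite $P_4$-free graph is a vertex-disjoint union of complete bipartite graphs; the additional exclusion of $S_4$ caps the maximum degree at $2$. A complete bipartite graph with maximum degree at most $2$ must be one of $K_{1,1}$, $K_{1,2}$ or $K_{2,2}$, i.e.\ an edge, a cherry $P_3$, or a $4$-cycle. Hence every $\{P_4,S_4\}$-free bipartite graph is a vertex-disjoint union of edges, cherries, and $C_4$'s.

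For the lower bound I would count edges per template. Each allowed component has at least as many vertices as edges ($K_2$: $2$ vs.\ $1$; $P_3$: $3$ vs.\ $2$; $C_4$: $4$ vs.\ $4$), so any such template on the $n$-vertex set contains at most $n$ edges, with equality exactly when the template is a pure $C_4$-orchard $(n/4)C_4$. Since $|E(K_n)|=\binom{n}{2}$, every $\{P_4,S_4\}$-free bipartite partition of $K_n$ uses at least
\[
\left\lceil\frac{\binom{n}{2}}{n}\right\rceil=\left\lceil\frac{n-1}{2}\right\rceil=\frac{n}{2}
\]
templates when $n$ is even (in particular when $4\mid n$).

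For the matching upper bound I would invoke Theorem 2.2 of \cite{Danzig}: when $4\mid n$, $K_n$ decomposes into one perfect matching together with $n/2-1$ copies of $(n/4)C_4$. Each piece is a vertex-disjoint union of edges or of $C_4$'s, hence bipartite and $\{P_4,S_4\}$-free, and the total count is $1+(n/2-1)=n/2$. Combined with the previous paragraph this yields $\chi'_{\mathcal H}(n)=n/2$.

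The main point is really just recognizing that the ratio $|E|/|V|=1$ is attained only by $C_4$-orchards, which is why the trivial edge-count lower bound is tight precisely when the structural $C_4$-decomposition exists. There is no genuine obstacle beyond quoting the Danzinger et al.\ result; a self-contained proof would instead have to reproduce their decomposition of $K_n$ minus a perfect matching into $(n/4)C_4$'s for $4\mid n$, which is the deeper combinatorial ingredient.
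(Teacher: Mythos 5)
Your proposal is correct and follows essentially the same route as the paper: the paper also obtains the upper bound by citing the Danziger et al.\ decomposition of $K_n$ into one perfect matching plus $n/2-1$ copies of $(n/4)C_4$, and relies on the same trivial edge-counting lower bound (each template, being a disjoint union of $K_2$'s, $P_3$'s and $C_4$'s, has at most $n$ edges, forcing at least $\lceil\binom{n}{2}/n\rceil=n/2$ templates for even $n$). Your write-up merely makes explicit the template characterization and the counting step that the paper leaves implicit.
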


\subsection {$\{C_4, S_4, 2K_2\}$-free bipartite graphs} The templates are only the paths $P_4$, $P_3$ or $K_2$. In this case an optimal partition can be obtained again from the Hamilton cycle decomposition of $K_n$, by cutting each Hamilton cycle into $n/3$ $P_4$'s (with divisibility conditions on $n$).
\begin{observation}
	If $\mathcal{H}=\{C_4, S_4, 2K_2\}$, then
	$\chi'_{\mathcal{H}}(n)=\frac{n(n-1)}{6}$, when $n\equiv3\mod 6$.
\end{observation}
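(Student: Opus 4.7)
The plan is to classify the admissible template graphs, then to obtain matching lower and upper bounds by edge counting and by a Hamilton-cycle construction.

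First I would characterize the templates. Let $T$ be a nonempty bipartite graph without isolated vertices that avoids $C_4$, $S_4$, and $2K_2$ as induced subgraphs. Being $S_4$-free forces $\Delta(T)\le 2$, so every component of $T$ is a path or a cycle. The excluded $C_4$ rules out $4$-cycle components, and every even cycle $C_{2k}$ with $k\ge 3$ contains an induced $2K_2$ (take two non-adjacent edges of the cycle), so no cycle component can occur. Two edges in different components would likewise induce a $2K_2$, so $T$ must be connected, and is therefore a single path. Finally, $P_5$ already contains an induced $2K_2$ (formed by its two end edges), so $T\in\{K_2,P_3,P_4\}$. Consequently each template contains at most $|E(P_4)|=3$ edges, yielding the lower bound $\chi'_{\mathcal H}(n)\ge \binom{n}{2}/3 = n(n-1)/6$; note that $n(n-1)/6$ is an integer under the hypothesis $n\equiv 3\pmod 6$.

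For the matching upper bound I would invoke Walecki's theorem: since $n$ is odd, $E(K_n)$ decomposes into $(n-1)/2$ edge-disjoint Hamilton cycles. Because $3\mid n$, each Hamilton cycle $v_1v_2\cdots v_nv_1$ can be cut along every third vertex into $n/3$ edge-disjoint copies of $P_4$, namely $v_1v_2v_3v_4$, $v_4v_5v_6v_7,\ldots,v_{n-2}v_{n-1}v_nv_1$. These paths share endpoints but partition the edges of the cycle, and each $P_4$ is a valid template. The resulting partition of $K_n$ uses $\tfrac{n-1}{2}\cdot\tfrac{n}{3}=\tfrac{n(n-1)}{6}$ templates, matching the lower bound.

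I expect the main (really the only) obstacle to be the classification step, and in particular the observation that two nonempty components within a single $G_i$ would already induce a $2K_2$ — this is what forces each template to be a single short path rather than, say, a disjoint union of $P_4$'s. Once the classification is secured, both bounds reduce to routine edge counting and an off-the-shelf Hamilton decomposition.
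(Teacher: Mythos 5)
Your proof is correct and follows essentially the same route as the paper: classify the admissible templates as $K_2$, $P_3$, $P_4$ (via the degree bound from $S_4$, the exclusion of cycles and of disconnected or long paths via induced $2K_2$), get the lower bound $\binom{n}{2}/3$ by edge counting, and match it by cutting each Hamilton cycle of Walecki's decomposition into $n/3$ edge-disjoint copies of $P_4$. Your write-up simply makes explicit the details the paper leaves implicit.
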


\subsection {$\{C_4, P_4, S_4, 2K_2\}$-free bipartite graphs} In this case, the templates are {\bf single cherries}.
It is a known corollary of Tutte's theorem on perfect matchings that every connected graph with an even number of edges can be decomposed into $P_3$-s. Hence the trivial lower bound is sharp.
	\begin{observation}
		If $\mathcal{H}=\{P_4, C_4, S_4, 2K_2\}$, then
		$\chi'_{\mathcal{H}}(n)=\left\lceil\frac{n(n-1)}{4}\right\rceil.$
	\end{observation}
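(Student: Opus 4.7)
The plan is to split the argument into three parts: characterize the templates, establish the lower bound by an edge count, and produce the matching upper bound from the cited corollary of Tutte's theorem.

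First I would verify that the templates must be either a single edge $K_2$ or a single cherry $P_3$. Forbidding $S_4$ forces the maximum degree to be at most $2$, so each connected component of a template is a path or an even cycle (the template is bipartite). Forbidding $C_4$ kills the $4$-cycle component; longer even cycles $C_{2k}$ with $k\ge3$ contain an induced $P_4$, so those are ruled out by forbidding $P_4$. Among path components, forbidding $P_4$ leaves only $K_2$ and $P_3$ components. Finally, any two components together produce an induced $2K_2$ (pick one edge from each component), so every template is connected. Hence each $G_i$ has at most two edges.

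For the lower bound, since each template carries at most two edges and $E(K_n)=\binom{n}{2}$, any admissible partition uses at least $\lceil n(n-1)/4\rceil$ templates.

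For the upper bound I would invoke the cited consequence of Tutte's matching theorem: every connected graph with an even number of edges admits a decomposition into cherries $P_3$. When $n\equiv 0,1\pmod 4$ we have $\binom{n}{2}$ even, so $K_n$ itself decomposes into exactly $n(n-1)/4$ cherries. When $n\equiv 2,3\pmod 4$ we have $\binom{n}{2}$ odd; in this case I would remove a single edge $e$ from $K_n$, observe that $K_n-e$ is still connected and now has an even number of edges, apply the $P_3$-decomposition theorem to $K_n-e$, and reinstate $e$ as one extra $K_2$ template. The total number of templates is then $1+(\binom{n}{2}-1)/2=\lceil n(n-1)/4\rceil$, matching the lower bound.

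The only non-trivial step is the Tutte-based decomposition statement, but it is quoted as known; modulo that, the characterization of templates and the edge count give both directions immediately, and the small cases $n\le 3$ are handled by inspection.
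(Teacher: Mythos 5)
Your proof is correct and takes essentially the same route as the paper: the trivial edge-count lower bound (each template is a $K_2$ or a cherry $P_3$, hence has at most two edges) matched by the upper bound from the cited corollary of Tutte's theorem on decomposing connected graphs with an even number of edges into cherries. The paper compresses all of this into one line, so your template characterization and the handling of the parity case $n\equiv 2,3\pmod 4$ (deleting one edge, decomposing $K_n-e$, and reinstating $e$ as a $K_2$ template) are precisely the details it leaves implicit.
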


\section{The proof of Theorem~\ref{NVGtetel}.}\label{sec:proof}
\begin{proof}[of the upper bound]
We construct a ($2K_2$-free bipartite) partition of $K_n$ consisting of at most $2\left\lceil\sqrt{n}\right\rceil-2$ Ferrers graphs, for any $n\ge1$.

First, we assume that $n$ is a square number, that is, $\left\lceil\sqrt{n}\right\rceil=\sqrt{n}$ holds.
In this case we can also assume that the vertex set of $K_n$ is the set $\mathcal{V}:=\{1,\dots,\sqrt{n}\}\times\{1,\dots,\sqrt{n}\}$.
We divide the edge set of $K_n$ into two classes as follows. Let $e$ be an edge of $K_n$ between the vertices $(x,y)$ and $(x',y')$,
where $x\le x'$. We say that $e$ is a \emph{descent} if $x<x'$ and $y\ge y'$.
Otherwise we say that $e$ is an \emph{ascent}, i.e.\ $e$ is an ascent if $x<x'$ and $y<y'$, or $x=x'$ (and $y\ne y'$).
See also Figure~\ref{fig_inc_dec}. (If the vertices are regarded as points in the Cartesian plane and the edges are drawn as straight line segments,
then the sign of the slope of an edge $e$ determines whether $e$ is a descent or an ascent.) Note that by our definition, ``horizontal'' edges are descents and ``vertical'' edges are ascents.
By the \emph{left endpoint} of a descent edge $e$ we mean the (unique) endpoint of $e$ which has smaller first coordinate;
by the \emph{lower endpoint} of an ascent edge $f$ we mean the (unique) endpoint of $f$ which has smaller second coordinate.
\begin{figure}[htbp]%
\includegraphics[scale=0.8]{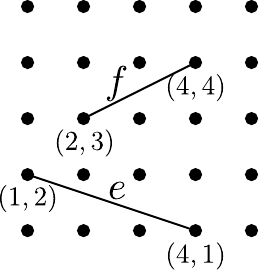}%
\centering
\caption {A descent edge $e$ and an ascent edge $f$ in $K_{25}$}\label{fig_inc_dec}%
\end{figure}%

After these preliminaries, we define $2\sqrt{n}-2$ Ferrers graphs, $G_1,G_2\dots,G_{\sqrt{n}-1},\allowbreak G'_1,G'_2,\dots,G'_{\sqrt{n}-1}$,
which gives a bipartite partition of $K_n$. For $i=1,\dots,\sqrt{n}-1$, let $G_i$ be the bipartite graph with color classes
$A_i=\{(x,y)\in\mathcal{V}: x=i\}$ and $B_i=\{(x,y)\in\mathcal{V}: x>i\}$ such that the vertices $a\in A_i$ and $b\in B_i$ are adjacent in $G_i$
if and only if $ab$ is a descent edge of $K_n$. For $j=1,\dots\sqrt{n}-1$, let $G'_j$ be the bipartite graph with color classes
$A'_j=\{(x,y)\in\mathcal{V}: y=j\}$ and $B'_j=\{(x,y)\in\mathcal{V}: y>j\}$ such that the vertices $a\in A'_j$ and $b\in B'_j$ are adjacent in $G'_j$
if and only if $ab$ is an ascent edge of $K_n$. 
\begin{figure}[htbp]%
\includegraphics[scale=0.8]{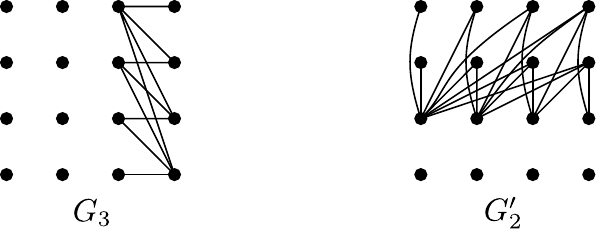}%
\centering
\caption {Illustration of $G_i$ and $G'_j$ (for $n=16$)}\label{fig_D3_I2}%
\end{figure}%

First we check that these bipartite graphs are indeed Ferrers graphs. In a graph $G_i$, the neighborhood $N(i,y_0)$ of the vertex $(i,y_0)\in A_i$
is the set $\{(x,y)\in\mathcal{V}:x>i,\,y\le y_0\}$; and so we actually have the nested property
\[N(i,1)\subseteq N(i,2)\subseteq N(i,3)\subseteq\dots\subseteq N(i,\sqrt n).\]
% for all $s\in\{1,\dots,\sqrt{n}-1\}$. 
In a graph $G'_j$, the neighborhood $N(x_0,j)$ of the vertex $(x_0,j)\in A'_j$
is the set $\{(x,y)\in\mathcal{V}:x\ge x_0,\,y>j\}$; and so we have the nested property
\[N(1,j)\supseteq N(2,j)\supseteq N(3,j)\supseteq\dots\supseteq N(\sqrt n,j)\]
again. %for all $t\in\{1,\dots,\sqrt{n}-1\}$.
Now all that remains is to show that the graphs $G_1,\dots,G_{\sqrt{n}-1},G'_1,\dots,G'_{\sqrt{n}-1}$
partition $K_n$. Observe that for all $i=1,\dots,\sqrt{n}-1$, the graph $G_i$ contains all descent edges of $K_n$
whose left endpoint has the first coordinate $i$, and for all $j=1,\dots,\sqrt{n}-1$, the graph $G'_j$ contains all ascent edges of $K_n$
whose lower endpont has the second coordinate $j$; and all edges of $K_n$ are contained in exactly one of these $2\sqrt{n}-2$ edge classes.
Thus we proved that $\chi'_{2K_2}(n)\le 2\sqrt{n}-2$ for all square numbers $n$.

If $n$ is a non-square number, then consider the closest larger square number $n^*:=\left(\lceil\sqrt n\rceil\right)^2$ (for which the previous case applies),
and bound $\chi'_{2K_2}(n)$ using its monotonicity (see Observation~\ref{obs_mono}):
$$\chi'_{2K_2}(n)\le\chi'_{2K_2}(n^*)\le2\sqrt{n^*}-2=2\left\lceil\sqrt{n}\right\rceil-2.$$
So, the upper bound is proved for all positive integers $n$.
%This means that 
%\[\GP(2K_2,n)\le 2\sqrt{n^*}-2= 2\lceil\sqrt n\rceil-2< 2\sqrt n,\]
%so the theorem is proved for all $n$.
\end{proof}

Before proving the lower bound, we present two required lemmas.
\begin{lemma}\label{NVGlemma1} If a Ferrers graph $F$ contains a set of $m$ independent edges, then $F$ has at least $m(m+1)/2$ edges.
\end{lemma}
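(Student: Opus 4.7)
The plan is to exploit the nested neighborhood property of Ferrers graphs directly, without any induction or double counting beyond a simple summation.

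Let $M=\{a_{i_1}b_{j_1},\dots,a_{i_m}b_{j_m}\}$ be a matching of size $m$ in $F$, where $a_{i_k}\in A$ and $b_{j_k}\in B$, and all the $a_{i_k}$ (respectively all the $b_{j_k}$) are distinct. By the definition of a Ferrers graph, $A$ admits an ordering under which the neighborhoods are nested; restricting this ordering to the $m$ matching endpoints in $A$, we may assume without loss of generality that
\[ N(a_{i_1})\subseteq N(a_{i_2})\subseteq\dots\subseteq N(a_{i_m}). \]

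Next I would observe that since $a_{i_k}b_{j_k}$ is an edge, we have $b_{j_k}\in N(a_{i_k})$, and the nesting then forces $b_{j_k}\in N(a_{i_l})$ for every $l\ge k$. Consequently, for each $l=1,\dots,m$, the vertex $a_{i_l}$ is adjacent to all $l$ distinct vertices $b_{j_1},b_{j_2},\dots,b_{j_l}$, contributing at least $l$ edges of $F$. Summing these contributions over $l=1,\dots,m$ and noting that the edges counted for different values of $l$ are distinct (they share no $a$-endpoint), we obtain at least
\[ 1+2+\dots+m=\frac{m(m+1)}{2} \]
edges in $F$, as desired.

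There is no real obstacle here: the only subtlety is to justify that the nesting property survives restriction to the subset $\{a_{i_1},\dots,a_{i_m}\}\subseteq A$, which is automatic because subset containment is preserved under restriction. The bound is also tight, achieved by the ``staircase'' Ferrers graph whose $k$-th vertex on the $A$ side has exactly $k$ neighbors, so no slack needs to be chased.
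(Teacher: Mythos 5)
Your proof is correct and is essentially identical to the paper's argument: both exploit the nested neighborhoods $N(a_{i_1})\subseteq\dots\subseteq N(a_{i_m})$ to conclude that the $l$-th matched vertex in $A$ is adjacent to the first $l$ matched vertices in $B$, yielding $1+2+\dots+m$ distinct edges. Your explicit justification that nestedness survives restriction to the matched vertices, and your remark on tightness via the staircase graph, are fine additions but do not change the substance.
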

\begin{proof} Let $A$ and $B$ be the two color classes of $F$. Now let $u_1v_1,u_2v_2,\dots,u_mv_m$
be $m$ independent edges of $F$, where $u_1,\dots,u_m\in A$, $v_1,\dots,v_m\in B$. Since $F$ is a Ferrers graph,
we can assume without loss of generality that $$N(u_1)\subseteq N(u_2)\subseteq\dots\subseteq N(u_m).$$
This chain implies that $u_iv_j$ is also an edge of $F$ for all $1\le j\le i\le m$, cf.\ Figure~\ref{fig_NVGlemma1}.
(The edge $u_iv_i$ guarantees that $v_i\in N(u_i)$, for $i=1,\dots,m$. Since $v_1\in N(u_1)\subseteq N(u_2)$, thus $u_2$ is also adjacent to $v_1$.
Then $v_1,v_2\in N(u_2)\subseteq N(u_3)$ implies that $u_3$ is adjacent to $v_1$ and $v_2$; and so on.)
We have just found $1+2+\dots+m=m(m+1)/2$ distinct edges in $F$.
\begin{figure}[htbp]%
\includegraphics[scale=0.35]{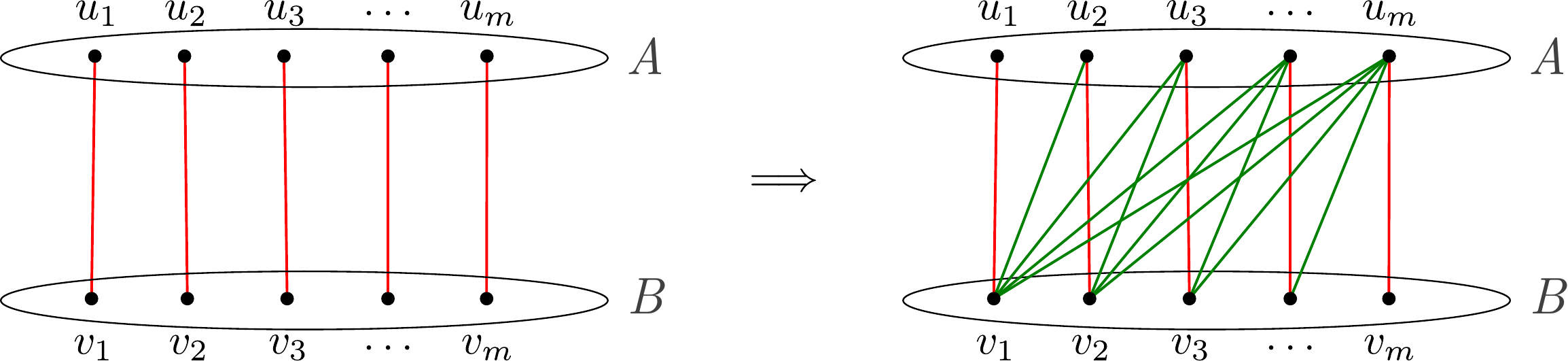}%
\centering
\caption {Illustration of the proof of Lemma~\ref{NVGlemma1}}\label{fig_NVGlemma1}%
\end{figure}%
\end{proof}
\def\CL{\text{CL}}
\begin{lemma}\label{NVGlemma2} For fixed integers $k\ge1$ and $d\ge0$,
assume that $K_{2^k}$, the complete graph on $2^k$ vertices, is partitioned into the bipartite graphs $G_1,G_2,\dots,G_{k+d}$.
Then there exist (at least) $k$ graphs in $\{G_1,\dots,G_{k+d}\}$ such that each contains a set of at least $\frac{2^{k-2}}{d+1}$ independent edges.
\end{lemma}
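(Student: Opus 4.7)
My plan is to prove the lemma by contradiction, combining König's theorem with the folklore bound $\chi'_{\emptyset}(n)=\lceil\log_2 n\rceil$ from Observation~\ref{obs_empty}. Suppose that at most $k-1$ of the graphs $G_1,\dots,G_{k+d}$ contain a matching of size at least $\frac{2^{k-2}}{d+1}$; then at least $d+1$ of them --- call these the \emph{small} graphs --- have matching number strictly less than $\frac{2^{k-2}}{d+1}$.

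The main step is to exploit König's theorem in each small graph. Since each $G_i$ is bipartite, I would pick in every small $G_i$ a vertex cover $C_i$ whose size equals the matching number, so $|C_i|<\frac{2^{k-2}}{d+1}$. Setting $C:=\bigcup C_i$, taken over all small graphs, gives
\[|C|<(d+1)\cdot\frac{2^{k-2}}{d+1}=2^{k-2},\]
and hence the remaining vertex set $V:=V(K_{2^k})\setminus C$ has $|V|>2^k-2^{k-2}=3\cdot 2^{k-2}>2^{k-1}$. By the vertex-cover property, no edge of any small graph has both endpoints in $V$, so the restrictions of the (at most $k-1$) non-small graphs to $V$ alone form a bipartite partition of the complete graph on $V$.

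Since $|V|\ge 2^{k-1}+1$, Observation~\ref{obs_empty} forces such a partition to use at least $\lceil\log_2|V|\rceil\ge k$ template graphs, contradicting the fact that only $k-1$ non-small graphs are available. I do not foresee a serious obstacle beyond the rather tight bookkeeping: the numerical values $2^{k-2}$ and $d+1$ in the statement are calibrated exactly so that $(d+1)\cdot\tfrac{2^{k-2}}{d+1}=2^{k-2}$ and $2^k-2^{k-2}>2^{k-1}$, which is precisely what is needed to push the logarithmic lower bound from $k-1$ up to $k$. The only subtlety is to use the strict inequality $|C_i|<\tfrac{2^{k-2}}{d+1}$ throughout (as the threshold need not be an integer), and to note that small values of $k$ are handled automatically by the same inequalities.
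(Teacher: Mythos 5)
Your overall strategy --- K\H{o}nig duality plus the folklore $\lceil\log_2 n\rceil$ lower bound of Observation~\ref{obs_empty} --- is genuinely different from the paper's proof (which assigns to each vertex a $0$--$1$ class-vector recording its side in every $G_i$, groups the vertices by the last $k-1$ coordinates, extracts a matching of size $2^{k-2}$ inside those groups, and applies the pigeonhole principle to any chosen $d+1$ graphs). However, as written your argument has a genuine gap in the counting step. The contradiction hypothesis gives you \emph{at least} $d+1$ small graphs, but possibly many more --- up to all $k+d$ of them --- while you define $C$ as the union of the vertex covers of \emph{all} small graphs. The estimate $|C|<(d+1)\cdot\frac{2^{k-2}}{d+1}=2^{k-2}$ is only justified when that union ranges over at most $d+1$ graphs; if there are $s>d+1$ small graphs, you only get $|C|<s\cdot\frac{2^{k-2}}{d+1}$, and the proof can then fail to reach any contradiction. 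Concretely, take $k=6$, $d=0$: if four of the six bipartite graphs were small (K\H{o}nig covers of size at most $15$ each) and two were not, your argument only guarantees $|V|\ge 64-60=4$, and a complete graph on $4$ vertices \emph{can} be partitioned into the $2$ remaining bipartite graphs, so nothing is contradicted. The trade-off works against you: each extra small graph costs you up to $\frac{2^{k-2}}{d+1}$ vertices of $V$ but lowers the logarithmic threshold by only one.

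The repair is short and keeps your ideas intact: under the contradiction hypothesis, choose \emph{exactly} $d+1$ of the small graphs and let $C$ be the union of their K\H{o}nig vertex covers, so that $|C|<2^{k-2}$ and $|V|\ge 2^{k-1}+1$ as you computed. Every edge of $K_{2^k}$ with both endpoints in $V$ avoids these $d+1$ graphs, hence lies in one of the \emph{other} $k+d-(d+1)=k-1$ graphs of the partition (whether or not those are small); their restrictions to $V$ are bipartite and partition the complete graph on $V$, and Observation~\ref{obs_empty} forces at least $\lceil\log_2|V|\rceil\ge k$ of them, the desired contradiction. In this corrected form your argument establishes exactly the same intermediate claim as the paper's proof --- among \emph{any} $d+1$ of the graphs $G_1,\dots,G_{k+d}$, at least one contains $\frac{2^{k-2}}{d+1}$ independent edges --- but it replaces the paper's explicit matching construction and pigeonhole count by K\H{o}nig's theorem, outsourcing the final step to the folklore logarithmic bound. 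That is a nice, arguably more conceptual alternative; it just needs the quantifier slip above to be fixed before it is a proof.
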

\begin{proof} %We use the notation $\mathcal{G}:=\{G_1,\dots,G_{k+d}\}$ for the given bipartite partition.
Let $A_i$ and $B_i$ be the two color classes of $G_i$, for $i=1,\dots,k+d$. We can assume that $A_i\cup B_i = V(K_{2^k})$
by adding isolated vertices to the graph $G_i$ if necessary. To each vertex $v$ of $K_{2^k}$
we assign a vector $\CL(v)\in\{0,1\}^{k+d}$, whose $i^{\text{th}}$ coordinate is defined as $0$, if $v\in A_i$, and it is defined as $1$, 
if $v\in B_i$, for $i=1,\dots,k+d$.
We call the vector $\CL(v)$ the \emph{class-vector} of $v$. We mention some properties of class-vectors:
\begin{enumerate}
\item[(i)] If $u\ne v$, then $\CL(u)\ne\CL(v)$.
\item[(ii)] For fixed vertices $u\ne v$, let $S\subseteq\{1,\dots,k+d\}$ be the set of coordinates where $\CL(u)$ and $\CL(v)$ differ,
and let $G_i$ be the graph of the partition containing the edge $uv$ (of $K_{2^k}$). Then the index $i$ must be an element of $S$.
\end{enumerate}
Both properties follow directly from the fact that if the edge $uv\in E(K_{2^k})$ is contained in the graph $G_i$ of the partition, then
$\CL(u)$ and $\CL(v)$ differ in coordinate~$i$, because $u$ and $v$ are in different color classes of $G_i$. (In fact, property (i) is a corollary of (ii).)\par
The rest of the proof relies only on the information encoded in $\mathcal{C}:=\{\CL(v):v\in V(K_{2^k})\}$,
the set of class-vectors. Recall that $\mathcal{C}\subseteq\{0,1\}^{k+d}$, and note that $|\mathcal{C}|=|V(K_{2^k})|=2^k$ by property (i) above. From now on, we identify the vertices of $K_{2^k}$ with their class-vectors, and so $V(K_{2^k})=\mathcal C$ with a slight abuse of notation.
We say that a graph $G$ is \emph{large} if it contains a set of at least $\frac{2^{k-2}}{d+1}$ independent edges; 
otherwise we say that $G$ is \emph{small}. We need to prove that there are at most $d$ small graphs
among $G_1,\dots,G_{k+d}$, i.e.\ this bipartite partition cannot have $d+1$ small graphs.
To this end, we prove that there exists at least one large graph among arbitrary $d+1$ graphs from $\{G_1,\dots,G_{k+d}\}$. Without loss of generality,
we show that at least one of the graphs $G_1,\dots,G_{d+1}$ is large. We group the class-vectors in $\C$ (vertices of $K_{2^k}$) by their last $k-1$ coordinates:
For any $\bold b\in\{0,1\}^{k-1}$, let $V_{\bold b}\subseteq \mathcal{C}$ be the set of those class-vectors (vertices) whose last $k-1$ coordinate is $\bold b$.
($V_{\bold b}$ might be the empty set.) The point is that for any fixed $\bold b\in\{0,1\}^{k-1}$, in $K_{2^k}$, every edge between the vertices of $V_{\bold b}$ must be
contained in one of the graphs $G_1,\dots,G_{d+1}$, by property (ii) of class-vectors. Since these edges form a complete graph $K_{\bold b}$ on vertex set $V_{\bold b}$,
we can trivially find a matching $M_{\bold b}$ of size $\left\lfloor\frac{|V_{\bold b}|}{2}\right\rfloor$ in $K_{\bold b}$. If we do this for all sets
$V_{\bold b}$ (for all $\bold b\in\{0,1\}^{k-1}$), and take the union of the resulting independent edge sets $M_{\bold b}$,
we yield an independent edge set $M$ of size
\begin{multline*}
|M|=\sum_{\bold b\in\{0,1\}^{k-1}}\left\lfloor \frac{|V_{\bold b}|}2\right\rfloor\ge
\sum_{\bold b\in\{0,1\}^{k-1}}\left(\frac{|V_{\bold b}|}2-\frac12\right)=\\
\frac12\left(\sum_{\bold b\in\{0,1\}^{k-1}}|V_{\bold b}|\right)-\frac12\cdot{2^{k-1}}=
\frac12|\mathcal{C}|-2^{k-2}=
\frac12\cdot{2^k}-2^{k-2}=2^{k-2}
\end{multline*}
in $K_{2^k}$. Since every edge of $M$ is contained in one of the graphs $G_1,\dots,G_{d+1}$, thus at least one of these $d+1$ graphs 
must contain at least $\frac{2^{k-2}}{d+1}$ independent edges (from $M$) by the pigeonhole principle, i.e.\ there is a large graph among $G_1,\dots,G_{d+1}$.
This is what we wanted to prove, so the proof is now complete.
\end{proof}

\begin{proof}[of the lower bound in Theorem~\ref{NVGtetel}] First we assume that $n=2^k$ for some positive integer $k$. In this case
we have to prove that $\chi'_{2K_2}(2^k)>k + \frac{1}{4}\sqrt{k} - 1$. It is obvious that $\chi'_{2K_2}(2^k)\ge k$ because, as mentioned in Observation~\ref{obs_trivi}, it is a known (folklore) fact
that $K_{2^k}$ cannot be partitioned into less than $k$ (arbitrary) bipartite graphs. (In the terminology of the proof of Lemma~\ref{NVGlemma2},
a bipartite partition containing less than $k$ graphs would result $2^k$ different class-vectors with less than $k$ coordinates, which is a contradiction.)
Now assume that the Ferrers graphs $F_1,\dots,F_{k+d}$ partition $K_{2^k}$ such that $k+d$ is minimal, i.e.\ $k+d=\chi'_{2K_2}(2^k)$. We already know that
$d\ge0$, and we want to prove that $d>\frac{1}{4}\sqrt{k} - 1$ also holds. We apply Lemma~\ref{NVGlemma2} first with $G_i:=F_i$, for $i=1,\dots,k+d$:
This lemma guarantees that there exist $k$ graphs $F_{i_1},\dots,F_{i_k}$ in our partition, each having at least $\frac{2^{k-2}}{d+1}$ independent edges.
Then, by Lemma~\ref{NVGlemma1}, each $F_{i_s}$ has at least $\frac12\cdot\frac{2^{k-2}}{d+1}\cdot\left(\frac{2^{k-2}}{d+1}+1\right)$ edges, for $s=1,\dots,k$. We have 
found altogether at least 
$\frac k2\cdot\frac{2^{k-2}}{d+1}\cdot\left(\frac{2^{k-2}}{d+1}+1\right)$
distinct edges in $K_{2^k}$, which is bounded by $\frac{2^k(2^k-1)}2$, the total number of edges of $K_{2^k}$. The inequality
$$
\frac{k}2{\left(\frac{2^{k-2}}{d+1}\right)\!}^2<\frac k2\cdot\frac{2^{k-2}}{d+1}\cdot\left(\frac{2^{k-2}}{d+1}+1\right)\le\frac{2^k(2^k-1)}{2}<\frac12\cdot4^k$$
%=\frac{k}{16(c+1)^2}\cdot\frac12 4^k\ge\frac{k}{16(\sqrt k/4)^2}\cdot\frac12 4^k=
%\frac12\cdot4^k>\frac{2^k(2^k-1)}{2}
can be transformed into
$$\frac{\sqrt k}{4}<d+1$$
by a simple calculation, which completes the proof for the case $n=2^k$. \par
If $n$ is not a power of $2$, let $k^*$ be the largest integer such that $2^{k^*}\le n$, i.e.\ set $k^*:=\lfloor\log_2 n\rfloor$.
Then the previous case applies for $n^*:=2^{k^*}$, so by the monotonicity of $\chi'_{2K_2}(n)$ (see Observation~\ref{obs_mono}) we have that
$$\chi'_{2K_2}(n)\ge\chi'_{2K_2}(2^{k^*})>k^* + \frac{1}{4}\sqrt{k^*} - 1=\lfloor\log_2 n\rfloor + \frac{1}{4}\sqrt{\lfloor\log_2 n\rfloor} - 1,$$
as stated.
\end{proof}

\section{Complexity issues}\label{sec:complexity}
So far, we have mainly examined partitions of $K_n$. Finally we address the problem of computing $\chi'_{\mathcal H}(G)$ for a fixed $\mathcal{H}$ and arbitrary graph $G$. Some cases are the following.

\begin{observation} \label{edges} If
	${\mathcal H}=\{P_3, K_2+K_1\}$ then $\chi'_{\mathcal H}(G)=e(G)$, where $e(G)$ denotes the number of edges of $G$. 
\end{observation}

\begin{observation} \label{chromatic_index}
	If ${\mathcal H}=\{P_3\}$ then the language $\mathcal{L}=\{G: \chi'_{\mathcal H}(G)=3\}$ is NP-complete, see Holyer \cite{holy}. In fact $\chi'_{\mathcal H}(G)=\chi'(G)$ for all graph $G$, where $\chi'(G)$ is the chromatic index of the graph $G$.
\end{observation}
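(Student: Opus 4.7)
The plan reduces everything to Holyer's classical theorem via the combinatorial observation that induced $P_3$-free bipartite graphs are precisely matchings, which was already exploited in the subsection on $P_3$-free bipartite graphs.

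The first step is to establish the identity $\chi'_{\{P_3\}}(G)=\chi'(G)$ for every graph $G$. A bipartite graph is induced-$P_3$-free iff every vertex has degree at most $1$, i.e.\ it is a matching (with isolated vertices allowed), and conversely every matching is trivially bipartite. So a $\{P_3\}$-avoiding bipartite partition of $G$ is literally the same combinatorial object as a decomposition of $E(G)$ into matchings, i.e.\ a proper edge colouring; the minimum numbers of parts on the two sides therefore coincide.

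The second step is to transfer NP-completeness. Membership $\mathcal{L}\in\mathrm{NP}$ is immediate: a prover supplies three edge sets $E_1,E_2,E_3$ partitioning $E(G)$, and the verifier checks in polynomial time that each $E_i$ is a matching. For NP-hardness I would invoke Holyer's theorem \cite{holy}, which says that deciding $\chi'(G)\le 3$ is NP-complete even when $G$ is cubic. On a cubic graph with at least one edge, Vizing's theorem forces $\chi'(G)\in\{3,4\}$, so ``$\chi'(G)\le 3$'' is equivalent to ``$\chi'(G)=3$'', which by the first step equals ``$\chi'_{\{P_3\}}(G)=3$''. Hence the identity map on graphs is a polynomial-time reduction from Holyer's language to $\mathcal{L}$, and $\mathcal{L}$ is NP-complete.

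There is no real technical obstacle in this proof, since all the hardness has been packaged inside Holyer's theorem. The only things that need care are the routine bijective translation between matchings and $P_3$-free bipartite graphs, and the small Vizing-based bookkeeping that lets us replace Holyer's inequality ``$\chi'(G)\le 3$'' by the equality ``$\chi'(G)=3$'' on his input class.
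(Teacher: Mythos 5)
Your proposal is correct and follows essentially the same route as the paper: the paper's own justification is exactly the identification of induced-$P_3$-free bipartite graphs with matchings (so $\chi'_{\{P_3\}}(G)=\chi'(G)$) combined with a citation of Holyer's theorem. Your additional care with NP-membership and the Vizing-based step from ``$\chi'(G)\le 3$'' to ``$\chi'(G)=3$'' on cubic graphs only makes explicit what the paper leaves implicit.
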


\begin{theorem}[Cherry Orchard] \label{hard_cherry}
	If ${\mathcal H}=\{P_4, S_4, C_4\}$ then the language $\mathcal{L}=\{G: \chi'_{\mathcal H}(G)=3\}$ is NP-complete.
\end{theorem}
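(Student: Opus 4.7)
The plan is to prove NP-completeness in two standard steps: membership in NP, and a polynomial-time reduction from a known NP-complete problem.

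Membership in NP is routine. A partition $E(G)=E_1\cup E_2\cup E_3$ serves as a witness for $\chi'_{\mathcal H}(G)\le 3$, and verification reduces to checking in polynomial time that each subgraph $(V,E_i)$ is a cherry orchard, i.e., that it has maximum degree at most $2$ and that every connected component contains at most two edges (equivalently, every component is a $K_2$ or a $P_3$).

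For NP-hardness, I would reduce from the $3$-edge-colorability problem on cubic graphs, which is NP-complete by Holyer's theorem (see Observation~\ref{chromatic_index}). Given a cubic graph $G$, construct $\tilde G\supseteq G$ by attaching a carefully engineered enforcement gadget $H_v$ at each vertex $v\in V(G)$, sharing only the vertex $v$ with the rest of $\tilde G$. The gadget should satisfy three properties: (a)~$\tilde G$ contains a vertex of degree at least $5$, so the lower bound $\chi'_{\mathcal H}(\tilde G)\ge 3$ is automatic because two cherry orchards absorb at most $2+2=4$ edges at any vertex; (b)~in every $3$-cherry-orchard partition of $\tilde G$, the three $G$-edges incident to each $v$ lie in three distinct color classes, so the restriction to $E(G)$ is a proper $3$-edge-coloring of $G$; and (c)~every proper $3$-edge-coloring of $G$ extends to some $3$-cherry-orchard partition of $\tilde G$ via a prescribed template coloring of the gadget edges. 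Together (a)--(c) yield $\chi'_{\mathcal H}(\tilde G)=3$ if and only if $G$ is $3$-edge-colorable, and since each gadget has constant size the reduction is polynomial.

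The main obstacle is the gadget design. The simplest candidate---attaching a $K_4$ at each $v$, raising $v$'s degree to $6$ and forcing the local distribution $(2,2,2)$ over the three color classes---does correctly rule out two $G$-edges sharing a color at $v$, via a distribution argument at the far endpoint (the far endpoints of any pair of same-color originals at $v$ would be forced to have degree only~$1$ in that color as leaves of a local $P_3$, contradicting the forced $(2,2,2)$ distribution at those neighbours). Unfortunately, the same choice forces every original vertex to be a $P_3$-centre in every color, so for every $G$-edge $uv$ in any color both $u$ and $v$ acquire another gadget edge in that color, and the component containing $uv$ extends to a forbidden $P_4$, making no $3$-partition of $\tilde G$ exist regardless of $3$-colorability. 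The remedy is to decouple the endpoints of each $G$-edge by first subdividing every edge of $G$ (inserting a degree-$2$ midpoint) and then attaching the gadgets to the original vertices, together with a small absorber at each midpoint whose role is to rigidify the midpoint's degree distribution so that the ``color seen from $u$'' and the ``color seen from $v$'' on each $G$-edge are forced to coincide---this consistency is exactly what promotes the per-vertex local proper coloring into a globally proper $3$-edge-coloring of $G$. The delicate case analysis at the midpoint vertices, together with an explicit template coloring of the gadget interiors for the forward direction, is the technical heart of the reduction.
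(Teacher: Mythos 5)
Your high-level strategy---a reduction from Holyer's $3$-edge-colorability of cubic graphs, with a gadget containing a vertex of degree at least $5$ so that the bound $\chi'_{\mathcal{H}}\ge 3$ is automatic---is exactly the paper's strategy, and your diagnosis of why the naive ``attach a $K_4$ at every vertex'' gadget fails is correct. But the proposal has a genuine gap: the gadget that must carry the entire reduction is never constructed, and the remedy you sketch in its place is internally inconsistent. You propose to subdivide every edge of $G$, \emph{keep} enforcement gadgets $H_u$ attached at the original vertices, and add a midpoint absorber forcing the two halves $um$ and $mv$ of each subdivided edge to receive the same color. These two requirements cannot coexist. If $um$ and $mv$ both get color $c$, they form a cherry centered at $m$, so in a valid partition $u$ may have \emph{no other} edge of color $c$ (otherwise the component containing $um$ has three edges, hence an induced $P_4$). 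Applying this to all three half-edges at $u$---which your property (b) forces to have three distinct colors---shows $u$ can be incident to no gadget edge of color $1$, $2$ or $3$, i.e.\ to no gadget edge at all. So any vertex gadget sharing the vertex $u$ makes a $3$-cherry-orchard partition of $\tilde G$ impossible for \emph{every} input $G$: the same failure mode you identified for the $K_4$ gadget reappears one level up, and the reduction would answer ``no'' on all instances. A smaller issue: your NP-membership witness only certifies $\chi'_{\mathcal{H}}(G)\le 3$, while the language is $\chi'_{\mathcal{H}}(G)=3$; on the image of a correct reduction this is harmless (a high-degree vertex forces $\ge 3$), but as stated for general $G$ it is incomplete.

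The paper resolves precisely this tension by using \emph{only} an edge gadget and no vertex gadgets: each edge $(A,B)$ is replaced by a ``bird'' (Figure~\ref{bird}) whose two legs $x,y$ meet at an internal vertex $S$ of degree $4$, and whose interior contains vertices $P$ and $Q$ of degrees $6$ and $5$. The backward direction is a short case analysis: if the legs had different colors, pigeonhole at $S$, the forced three-cherry structure at the degree-$6$ vertex $P$, and a counting contradiction at the degree-$5$ vertex $Q$ rule it out; hence in every $3$-cherry-orchard coloring the legs are monochromatic. The distinctness of the three leg colors at each original vertex $A$ then comes for free---two same-colored legs at $A$ together with their monochromatic partner legs inside the birds would form a monochromatic $P_4$---which is exactly the effect your midpoint-consistency idea is groping toward, but it only works because nothing else is attached at the original vertices (they keep degree $3$). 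The forward direction is an explicit template coloring of the bird interior (Figure~\ref{bird3}). To repair your write-up you would essentially have to discard the vertex gadgets, merge the ``absorber'' into the subdivided edge, and then design and verify that merged edge gadget; that verification is the actual content of the theorem and is what is missing from the proposal.
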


\begin{figure}[htbp]
	\centering
	\includegraphics[scale=0.32]{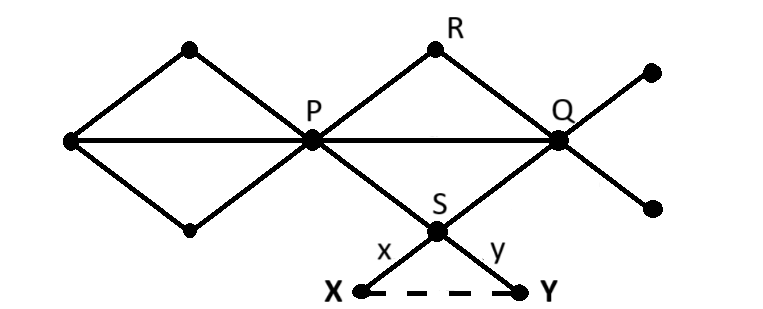}
	\caption {The gadget {\em bird}.}
	\label{bird}
\end{figure}

\noindent {\bf Proof:} Recall that it is NP-complete to decide whether a cubic graph has edge chromatic number 3, see Holyer \cite{holy}. We construct a graph $G^*$ such that $v(G^*)=v(G)+9e(G)$,
$e(G^*)=14e(G)$ and $\chi'_{\mathcal H}(G^*)=3$ iff $\chi'(G)=3$. First let us substitute each edge of $G$ with a gadget, let us call it a {\em bird}, see Figure \ref{bird}. The edges $x$ and $y$ in the figure are called the
{\em legs} of the bird. So, for each $e=(X, Y) \in G$, the edge $e$ is (removed and) replaced with a copy of the bird gadget such that the degree-$1$ end vertices of the legs are glued to the vertices $X$ and $Y$ of $G$ as indicated in Figure~\ref{bird}.

If $\chi'(G)=3$, then the edge colors can be extended to an $\mathcal{H}$-avoiding (cherry-) coloring of $G^*$.
Let the colors be $1, 2, 3$ and assume that an edge $e=(X, Y)$ got the color 1 in the original good coloring. Then in the new (cherry-)coloring both $x$ and $y$ are colored by 1 and let the other edges be colored by 1, 2 or 3, according to Figure \ref{bird3}, which is a good (cherry-)coloring of $G^*$.

\begin{figure}[htbp]
	\centering
	\includegraphics[scale=0.32]{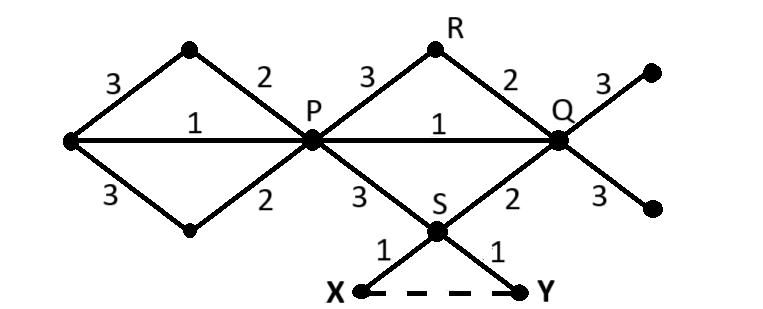}
	\caption {The good (cherry-)coloring of the gadget.}
	\label{bird3}
\end{figure}

To see the other direction, suppose that $G^*$ has an $\mathcal{H}$-avoiding (cherry-)coloring with colors $1, 2, 3$.
If the two legs are monochromatic in each bird gadget, i.e.\ $x$ and $y$ have the same color for each $e$, then by removing the gadgets and undoing the substitution we get a good 3 edge-coloring of $G$.

On the other hand, in a well (cherry-)colored gadget the colors of $x$ and $y$ must be the same. To see this, suppose that $x$ and $y$ have different colors in a gadget. Then, by the pigeonhole principle, among the four edges starting from vertex $S$, there exists
a pair of edges (different from $\{x,y\}$) of the same color.
Similarly, since the degree of $P$ is six, the six incident edges can be grouped into three monochromatic pairs,
which means that $P$ must be the center of three cherries $P_3$. Then the edge $(S,Q)$ must have the same color as $(S,X)$ or $(S,Y)$, because $(P,S)$ is contained in a cherry centered at $P$.
This implies that the five edges incident to $Q$ cannot be covered by cherries using $3$ colors (the edges $(S,Q)$ and $(P,Q)$ are contained in cherries centered at $S$ and $P$, respectively), which is a contradiction.
\bizveg
\\
%On the other hand, in a well (cherry-)colored gadget the colors of the edges $(X, S)$ and $(Y, S)$ must be the same. To see this, suppose those edges have different colors, say $\chi((X, S))=1$ and $\chi((Y, S))=2$ in a gadget. All colors 1, 2, and 3 must appear at least once among the pendant edges at both $P$ and $Q$ by the pigeonhole principle. But then the edge $(P, S)$ (and with the same argument $(Q, S)$) cannot have the color 1 or 2, since it would result in a $P_3$ in which all edges have the same color. So $(P, S)$ and $(Q, S)$ both must have the color 3. But then even a monochormatic $P_4$ appears in the gadget.

%We show this by a forcing. Our cases according to the first two $1$ colors (in the case i: $a$ and $f$, in case ii: $a$ and $i$ and in case iii: $a$ and $e$ (or $b$ symmetrically, in case iii there is also the symmetric case)) are shown in Figure \ref{bird4} and the solution of the cases in Figure \ref{bird5}.

%\begin{figure}[htbp]
%	\centering
%	\includegraphics[scale=0.32]{bird4.png}
%	\caption {The three cases, i, ii and iii.}
%	\label{bird4}
%\end{figure}

%\begin{figure}[htbp]
%	\centering
%	\includegraphics[scale=0.32]{bird5.png}
%	\caption {The forced solutions in the three cases.}
%	\label{bird5}
%\end{figure}

Thanks to the reviewer's comment, we should note here, that the cherry orchard 2-coloring is also NP-complete, follows from Theorem 2. in \cite{Gonc}. We were not aware of this result previously, but we believe that, regardless of this, it is worth retaining the proof of the 3-coloring case. 
Another accurate observation by the reviewer is that the proof of Theorem~\ref{hard_cherry} can be simplified: in the bird gadget, let P (resp. Q) be adjacent to S and to five vertices of degree 1. Thus, the NP-completeness of cherry orchard 3-coloring holds for 2-degenerate bipartite graphs with girth at least 6. We leave the details to the interested reader.

\section{Further Remarks}

This work can continue in a number of ways. One direction is to consider graphs other than $K_n$.
Real-world graphs motivate the general case, but graphs like $Q_d$, the $d$-dimensional cube, can be interesting from a purely mathematical point of view.
Developing exact or even good approximate algorithms to partition general graphs with special bipartite graphs are also interesting problems and valuable regarding applications. We consider the problem of finding the tight bounds for partitioning $K_n$ with Ferrers graphs (cf.\ Theorem~\ref{NVGtetel}) as the most challenging next step.

\subsection*{Acknowledgement}
We would like to thank the anonymous reviewers. Their comments and suggestions have greatly contributed to the improvement of the manuscript.


\begin{thebibliography}{99}

\bibitem{ASM} Adcock, A.B., Sullivan, B.D., Mahoney, M.W. (2013). Tree-like structure in large social and information networks. In {\em 2013 IEEE 13th International Conference on Data Mining}, pp. 1--10.

\bibitem{Bastolla} Bastolla, U., Fortuna, M.A., Pascual-García, A., Ferrera, A., Luque, B., Bascompte, J. (2009). The architecture of mutualistic networks minimizes competition and increases biodiversity. {\em Nature}, {\bf 458}(7241), 1018--1020.

\bibitem{BCsP} Bóta, A., Csizmadia, L., Pluhár, A. (2010). Community detection and its use in Real Graphs. In {\em Proceedings of the 2010 Mini-Conference on Applied Theoretical Computer Science (MATCOS)}, pp. 95--99.

\bibitem{Danzig} Danziger, P., Quattrocchi, G., Stevens, B. (2009). The Hamilton–Waterloo problem for cycle sizes 3 and 4. {\em Journal of Combinatorial Designs}, {\bf 17}(4), 342--352.

\bibitem{Diestel} Diestel R. (2025). Graph theory (Vol. 173). Springer Nature.

%\bibitem{FHT} Farber, M., Hujter, M., Tuza, Z. (1993). An upper bound on the number of cliques in a graph. {\em Networks}, {\bf 23}(3), 207-210.

\bibitem{GLP} Gera, I., London, A., Pluhár, A. (2022). Greedy algorithm for edge-based nested community detection. In {\em IEEE 2nd Conference on Information Technology and Data Science (CITDS)}, pp. 86--91.

\bibitem{GL} Gera, I., London, A. (2023). Detecting and generating overlapping nested communities. {\em Applied Network Science}, {\bf 8}(1), 51.

\bibitem{Gonc} Gon\c{c}alves, D., Ochem, P. (2009). On star and caterpillar arboricity. {\em Discrete Mathematics}, {\bf 309}(11), 3694--3702.

\bibitem{GP} Graham, R. L., Pollak, H. O. (1971). On the addressing problem for loop switching. {\em The Bell System Technical Journal}, {\bf 50}(8), 2495--2519.

\bibitem{holy} Holyer, I. (1981). The NP-completeness of edge-coloring. {\em SIAM Journal on Computing}, {\bf 10}(4), 718--720.

\bibitem{Jun} Junttila, E., Kaski, P. (2011). Segmented nestedness in binary data. In {\em Proceedings of the 2011 SIAM International Conference on Data Mining}, pp. 235--246.

\bibitem{Kleinberg} Kleinberg, J. (2002). An impossibility theorem for clustering. In {\em Advances in Neural Information Processing Systems}, {\bf 15}.

\bibitem{KU} Knauer, K., Ueckerdt, T. (2016). Three ways to cover a graph. {\em Discrete Mathematics}, {\bf 339}(2), 745--758.

\bibitem{KKTW} Kr\'al, D., Kratochv\'\i l, J., Tuza, Z., Woeginger, G.J. (2001). Complexity of coloring graphs without forbidden induced subgraphs. In {\em 27th International Workshop on Graph-Theoretic Concepts in Computer Science}, pp. 254--262.

\bibitem{SE} Lavrov, M. (2018). 4-cycle decomposition of a complete graph. {\em Mathematics Stack Exchange}, \url{https://math.stackexchange.com/q/2979661} (version: 2018-11-01).

\bibitem{LMP} London, A., Martin, R. R., Pluh\'ar, A. (2022). Graph clustering via generalized colorings. {\em Theoretical Computer Science}, {\bf 918}, 94--104.

\bibitem{wal} Lucas, E. (1892). {\em R{\'e}cr\'eations Math{\'e}matiques} (Vol. 2). Paris.

\bibitem{NG} Newman, M.E., Girvan, M. (2004). Finding and evaluating community structure in networks. {\em Physical Review E}, {\bf 69}(2), 026113.

\bibitem{PDFV} Palla, G., Der\'enyi, I., Farkas, I., Vicsek, T. (2005). Uncovering the overlapping community structure of complex networks in nature and society. {\em Nature}, {\bf 435}(9), 814--818.

\bibitem{Sch} Schaeffer, S. E. (2007). Graph clustering. {\em Computer Science Review}, {\bf 1}(1), 27--64.

\bibitem{Schwartz} Schwartz, S. (2022). An overview of graph covering and partitioning. {\em Discrete Mathematics}, {\bf 345}(8), 112884.

\bibitem{Uzzi} Uzzi, B. (1996). The sources and consequences of embeddedness for the economic performance of organizations: The network effect. {\em American Sociological Review}, {\bf 61}, 674--698.

\bibitem{vish} Vishwanathan, S. (2010). A counting proof of the Graham Pollak Theorem. {\em arXiv preprint arXiv:1007.1553}.

\bibitem{Wright} Wright, D.H., Patterson, B.D., Mikkelson, G.M., Cutler, A., Atmar, W. (1997). A comparative analysis of nested subset patterns of species composition. {\em Oecologia}, {\bf 113}, 1--20.

\bibitem{zar} Zarankiewicz, K. (1951). Problem p 101. {\em Colloq. Math.}, {\bf 2}, 301, 5.


\end{thebibliography}
\end{document}